\numberwithin{equation}{section}
\newtheorem{Thm}{Theorem}[section]
\newtheorem{Prop}[Thm]{Proposition}
\newtheorem{Lem}[Thm]{Lemma}
\newtheorem{Cor}[Thm]{Corollary}
\newtheorem*{Thmintro}{Theorem}
\newtheorem*{Propintro}{Proposition}
\theoremstyle{definition}
\newtheorem*{Def}{Definition}
\newtheorem{Rem}{Remark}
\newcommand{\Oz}{\Omega^2(M,TM)}
\newcommand{\Ozc}{\Omega^2(\Cont,\Cont)}
 \newcommand{\Oee}{\Omega^{1,1}(\Cont,\Cont)}
  \newcommand{\Os}{\Omega^{1,1}_s(\Cont,\Cont)}
  \newcommand{\Oa}{\Omega^{1,1}_a(\Cont,\Cont)}
 \newcommand{\Ozn}{\Omega^{2,0}(\Cont,\Cont)}
 \newcommand{\Onz}{\Omega^{0,2}(\Cont,\Cont)}
\newcommand{\Ot}{\Omega^{3}(M)}
\newcommand{\Otc}{\Omega^{3}(\Cont)}
 \newcommand{\Op}{\Omega^{+}(\Cont)}
 \newcommand{\Om}{\Omega^{-}(\Cont)}
\newcommand{\Ozx}{\Omega^2(\Cont,\Xi)}
 \newcommand{\Oep}{\Omega^1_+(\Cont,\Cont)}
 \newcommand{\Oem}{\Omega^1_-(\Cont,\Cont)}
 \newcommand{\Endcp}{\operatorname{End}_+(\Cont)}
 \newcommand{\Endcm}{\operatorname{End}_-(\Cont)}
\newcommand{\grad}{\operatorname{grad}}
\newcommand{\dom}{\operatorname{dom}}
\titleformat*{\section}{\large \sffamily \bfseries}
\titleformat*{\subsection}{\sffamily \bfseries}
\begin{document}
\title{Adapted connections on metric contact manifolds}
\author{Christoph Stadtm\"uller\footnote{\textsc{Humboldt-Universit\"at zu Berlin}, Institut f\"ur Mathematik, Unter den Linden 6, 10099 Berlin. \texttt{stadtmue@math.hu-berlin.de}}}

\maketitle

\begin{abstract}
In this paper, we describe the space of adapted connections on a metric contact manifold through the space of their torsion tensors. The torsion tensor is an element of $\Omega^2(M,TM)$ which splits into various subspaces. We study the parts of the torsion tensor according to this splitting to completely describe the space of adapted connections. We use this description to obtain characterizations of the generalized Tanka-Webster connection and to describe the Dirac operators of adapted connections.
\end{abstract}

\tableofcontents

\section{Introduction}
Suppose $(M^{2m+1},g,\eta,J)$ is a metric contact manifold, that is $(M,g)$ is a Riemannian manifold, $\eta\in\Omega^1(M)$ and $J\in\End(TM)$ and the various objects are compatible in the following sense:
\begin{equation*}
 J^2X=-X+\eta(X)\xi\quad\text{and}\quad g(JX,Y)=d\eta (X,Y),
\end{equation*}
where $\Cont=\ker\eta$ is called the \emph{contact distribution} and $\xi=\eta^{\natural}$ is called the \emph{Reeb vector field} and satisfies $\eta(\xi)=1,\;\xi\lrcorner d\eta =0$. A connection $\nabla$ is called \emph{adapted} to this structure, if it satisfies
\begin{equation*}
 \nabla g=0,\quad\nabla \eta=0,\quad \nabla\xi=0,\quad \nabla J=0.
\end{equation*}
Due to the conditions $\nabla g=0,\nabla J=0$ (which are actually sufficient), these connections are closely related to hermitian connections on almost hermitian manifolds. These connections have been extensively studied (cf, amongst others, \cite{PL}, \cite{Lichn}) and a detailed description of them using their torsion has been given by Paul Gauduchon \cite{Gau}.

Metric contact manifolds together with their adapted connections are an example of geometries with torsion (cf. \cite{Srni} for an introduction), which have been extensively studied for quite some time and some effort has been made to understand such connections and their Dirac operators, particularly in the case of totally skew-symmetric torsion (see for example \cite{FI}, where the existence of such connections is also discussed for almost metric contact manifolds). However, in general, adapted connections on metric contact manifolds remain largely unexplored. Some contributions have been made by Nicolaescu \cite{Ni} who constructs some adapted connections and Puhle \cite{Puhle} who considers adapted connections on 5-dimensional almost contact metric manifolds. In this paper, we use Gauduchon's methods for almost-hermitian manifolds to study adapted connections on metric contact manifolds. That is, we describe connections through their torsion by means of a careful study of the possible torsion tensors. In order to do so, we study a decomposition of $\Omega^2(M,TM)$ and the parts of the torsion with respect to this splitting.

We obtain a characterization of the torsion tensor, where certain parts are determined independently of the connection $\nabla$ and others may be chosen freely and any such choice will always give the torsion tensor of an adapted connection. More precisely we have (cf. theorem \ref{thm:torsion}):
\begin{Thmintro}
 Let $(M,g,\eta,J)$ be a metric contact manifold $\nabla$ an adapted connection. Then its torsion tensor has the following form:
\begin{equation*}
 T=N^{0,2}+\tfrac{9}{8}\omega-\tfrac{3}{8}\M\omega+B+\xi\otimes d\eta-\tfrac{1}{2}\eta\wedge (J\J) +\eta\wedge \Phi,
\end{equation*}
where $\omega$ is a three-form whose decomposition into $(p,q)$-forms consists only of forms of type $(2,1)$ and (1,2), $B\in \Omega^2(\Cont,\Cont)$ satisfies $B(J\cdot,J\cdot)=B$ and vanishes under the Bianchi operator and $\Phi$ is a skew-symmetric endomorphism of $\Cont$ satisfying $\Phi J=J \Phi$. The other parts are completely determined by the geometry of the contact structure.

Conversely, given any $\omega,B,\Phi$ as above there exists exactly one adapted connection $\nabla(\omega,B,\Phi)$ whose torsion is as given above.
\end{Thmintro}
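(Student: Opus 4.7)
The strategy follows Gauduchon's treatment of the almost-Hermitian case: parameterise adapted connections by their potential $A=\nabla-\nabla^g$, translate each adaptedness equation into a linear condition on $A$, and then re-express these conditions on the torsion $T\in\Oz$ via the standard isomorphism between skew potentials and torsion tensors. In this way the question becomes purely algebraic and reduces to identifying, component by component in a carefully chosen decomposition of $\Oz$, which pieces of $T$ are forced by the contact geometry and which may be prescribed freely.

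The first step is to decompose $\Oz$ using the splittings $TM=\Cont\oplus\mathbb{R}\xi$ and the $J$-bidegree decomposition on $\Cont$. This yields $\Ozc=\Ozn\oplus\Onz\oplus\Os\oplus\Oa$ for the $\Cont$-valued piece on $\Cont\times\Cont$, a one-dimensional summand $\mathbb{R}\cdot(\xi\otimes d\eta)$ for its $\xi$-component, and summands of the form $\eta\wedge\beta$ with $\beta\in\Oe$, further split by $J$-bidegree. The compact form in the theorem is obtained by regrouping the fully skew $(2,1)+(1,2)$-part into the single three-form $\omega$ and its image under the Bianchi operator $\M$.

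Next, adaptedness is imposed piece by piece. The conditions $\nabla\xi=0$ and $\nabla\eta=0$ force the $\xi$-valued component of $T\big|_{\Cont\times\Cont}$ to equal $d\eta$, accounting for the $\xi\otimes d\eta$ summand, and they pin the $\eta\wedge(\cdot)$ contribution coming from the Reeb direction down to $-\tfrac12\eta\wedge(J\J)$, since $\nabla^g\xi$ differs from $-J$ by precisely this tensor on a general metric contact manifold. The condition $\nabla J=0$, combined with the classical expression of $\nabla^gJ$ in terms of the Nijenhuis tensor of $J$ and of $d\eta$, forces the $(0,2)$-part of $T\big|_{\Cont\times\Cont}$ to be $N^{0,2}$ and yields the relation $\tfrac98\omega-\tfrac38\M\omega$ on the fully skew $(2,1)+(1,2)$-component, the precise coefficients arising from inverting the symmetrisation/Bianchi operator on this isotypic piece.

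The remaining freedom in $T$ is then exactly a tensor $B\in\Os$ satisfying the cyclic Bianchi identity, which no adaptedness equation constrains, together with a skew endomorphism $\Phi$ of $\Cont$ commuting with $J$ that parameterises the unrestricted part of $\eta\wedge(\cdot)$. For the converse, given $(\omega,B,\Phi)$ of the stated types one defines $\nabla(\omega,B,\Phi)=\nabla^g+A$ where $A$ is the unique potential corresponding under the torsion isomorphism to the prescribed $T$, and adaptedness is checked componentwise using the type conditions on $\omega$, $B$ and $\Phi$. I expect the main obstacle to be extracting the exact coefficients $\tfrac98$ and $-\tfrac38$: this requires a delicate analysis of how the Bianchi operator interacts with the $J$-bidegree splitting on the fully skew part of $\Oz$, and is where the algebra is most intricate.
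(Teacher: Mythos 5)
Your route is the paper's own: encode adaptedness of a metric connection in the single linear equation $A(X;Y,JZ)+A(X;JY,Z)=-(\lc F)(X;Y,Z)$ on the potential $A=\nabla-\lc$ (metricity plus $\nabla J=0$ already implies adaptedness), rewrite it on $T$ via $A=-T+\tfrac32\b T$, split along $TM=\Cont\oplus\R\xi$ and by $J$-type, feed in the computed type decompositions of $N$ and $\lc F$, and get the converse by prescribing $T$ and checking the equations componentwise. So there is no methodological difference; but as a proof the sketch has concrete holes. (i) Your decomposition of $\Oz$ is incomplete: besides $\Ozc$, $\xi\otimes\Omega^2(\Cont)$ and $\Cont\otimes\eta\wedge\Cont^*$ there is the summand $\xi\otimes\eta\wedge\Cont^*$ (the terms $T(\xi;\xi,\cdot)$); the theorem tacitly asserts that this part vanishes, the paper proves it from $\eta([\xi,X])=0$, and your plan never addresses it. Relatedly, the $\xi$-valued summand is all of $\xi\otimes\Omega^2(\Cont)$, not the line $\R\,(\xi\otimes d\eta)$: that $T^2_-=0$ and $T^2_+=d\eta$ is a consequence of adaptedness, not a feature of the splitting. (ii) The two genuinely nontrivial constraints produced by $\nabla J=0$ are asserted rather than derived: the identity $\b(T^{2,0}-T^{1,1}_a)=0$, which ties the $(2,0)$-part to the non-$\b$-closed $(1,1)$-part and, through the inversion formulas $T^{2,0}=\tfrac34(\omega-\M\omega)$ and $T^{1,1}_a=\tfrac38(\omega+\M\omega)$ with $\omega=(\b T)^+$, is precisely where $\tfrac98\omega-\tfrac38\M\omega$ comes from; and the equation in the Reeb direction, which is what forces the skew part $T^1_-=\Phi$ to commute with $J$. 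You correctly flag the first as the main obstacle, but the second does not appear in your plan at all. (Two small slips: $\M$ is the involution $B\mapsto B(J\cdot,J\cdot)$, not the Bianchi operator; and with the paper's conventions $\lc\xi=\tfrac12 J-\tfrac12 J\J$, so the symmetric part $-\tfrac12 J\J$ is right but the ``differs from $-J$'' heuristic is off by conventions.)
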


As an application, we obtain a characterization of the Tanaka-Webster connection and its generalization as the ``simplest'' adapted connection possible. Furthermore, we study the Dirac operators associated to adapted connections and determine those that are formally self-adjoint and the connections that have the same Dirac operator as the Levi-Civit\`{a} connection and the Tanaka-Webster connection. Concerning the Dirac operators, we obtain the following results (cf. proposition \ref{prop:Dirac} and corollaries following it):
\begin{Propintro}
 Let $\nabla(\omega,B,\Phi)$ be an adapted connection as described above. Then the induced Dirac operator has the following properties:
\begin{numlist}
 \item The induced Dirac operator is symmetric if and only if $\tr B=\frac{3}{8}\tr \M\omega$.

 \item Assuming that the induced Dirac operator is symmetric and given a second adapted connection $\nabla(\hat\omega,\hat B,\hat\Phi)$ whose Dirac operator is also symmetric, the two Dirac operators coincide if and only if $\omega=\hat\omega$ and $\Phi=\hat\Phi$

 \item The Dirac operator of $\nabla(\omega,B,\Phi)$ coincides with the one of the Tanaka-Webster connection if and only if $\omega=0$, $\Phi=0$ and $\tr B=0$.
\end{numlist}

\end{Propintro}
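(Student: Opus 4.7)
The plan rests on the standard Friedrich--Ivanov formula for the Dirac operator of a metric connection with torsion $T$: writing $T = T^A + T^1 + T^2$ for the Cartan decomposition into totally skew, vectorial, and cyclic-traceless components, the associated spin Dirac operator satisfies
\[
 D^\nabla \psi = D^g\psi + \tfrac{3}{4}\, T^A\cdot \psi + c\, V \cdot \psi,
\]
where $T^A$ is viewed as a $3$-form, $V(Y) = \sum_i g(T(e_i, Y), e_i)$ is the vectorial trace of $T$, $c$ is a nonzero universal constant, and the cyclic-traceless piece $T^2$ acts trivially on spinors. Clifford multiplication by a $3$-form is pointwise self-adjoint while Clifford multiplication by a $1$-form is pointwise skew-adjoint on spinors, so $D^\nabla$ is formally self-adjoint if and only if $V = 0$.

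For part (i), I would compute $V$ by summing the trace contributions of each summand of the torsion given by theorem \ref{thm:torsion}. The pure $3$-form $\omega$ has vanishing trace; $\eta\wedge\Phi$ is also trace-free since $\Phi$ is skew-symmetric on $\Cont$; the three geometric pieces $N^{0,2}$, $\xi\otimes d\eta$ and $\eta\wedge(J\J)$ yield contributions that depend only on the contact structure, which I expect to cancel amongst themselves. The remaining contributions come from $B$ and $\M\omega$: the former equals (a constant multiple of) $\tr B$, the latter a multiple of $\tr\M\omega$. With the coefficients $\tfrac{9}{8}$ and $-\tfrac{3}{8}$ appearing in the torsion formula, the condition $V = 0$ reduces precisely to $\tr B = \tfrac{3}{8}\tr \M\omega$.

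For part (ii), under the symmetry assumption $V = \hat V = 0$, so $D^\nabla = D^{\hat\nabla}$ is equivalent to $T^A = \hat T^A$. The space of $3$-forms on $M$ splits orthogonally as $\Lambda^3 TM = \Lambda^3\Cont \oplus \eta\wedge\Lambda^2\Cont$. The $\omega$-contribution lies in $\Lambda^3\Cont$ (since $\omega$ has only $(2,1)$- and $(1,2)$-components), the totally skew projection of $\eta\wedge\Phi$ lies in $\eta\wedge\Lambda^2\Cont$, $B$ contributes nothing to $T^A$ (being Bianchi-vanishing), and the geometric terms produce the same fixed $3$-form for both connections. Projecting the identity $T^A = \hat T^A$ to the two summands isolates $\omega = \hat\omega$ and $\Phi = \hat\Phi$.

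Part (iii) is then a direct application of (i) and (ii) to the Tanaka-Webster connection, which by the characterization established earlier in the paper corresponds to $\omega_{\mathrm{TW}} = 0$, $\Phi_{\mathrm{TW}} = 0$ (and has $\tr B_{\mathrm{TW}} = 0$). Equality of the two Dirac operators forces agreement of both $T^A$ and $V$, yielding $\omega = 0$, $\Phi = 0$, and $\tr B = 0$. The main obstacle is the trace computation of part (i): extracting $V$ from each of the six summands requires a careful interplay of the $(p,q)$-type decomposition on $\Cont$, the $J$-invariance and Bianchi constraints on $B$, and the splitting of $TM$ into horizontal and vertical parts; in particular one must verify that the contributions from $N^{0,2}$, $\xi\otimes d\eta$, and $\eta\wedge(J\J)$ really do cancel against one another. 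Once this bookkeeping is done, parts (ii) and (iii) follow at a formal level.
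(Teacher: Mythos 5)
Your proposal is correct and follows essentially the same route as the paper: the comparison formula showing the Dirac operator depends only on the trace and the totally skew (Bianchi) part of the torsion, symmetry $\Leftrightarrow$ vanishing trace, Dirac equivalence of symmetric ones $\Leftrightarrow$ equal skew parts, and then reading off $\tr T$ and $\b T$ from the explicit torsion formula. The one piece of bookkeeping you defer comes out even more simply than you anticipate: $N^{0,2}$, $\xi\otimes d\eta$ and $\eta\wedge(J\J)$ are each individually trace-free (the last by symmetry of $\J$ and $J\J=-\J J$), so no cancellation between them is needed, and the injectivity of $\Phi\mapsto\b(\eta\wedge\Phi)$ used in part (ii) is the paper's Lemma \ref{lem:bEnd}.
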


The exposition is organized as follows: We begin with a section introducing the basic differential geometric objects on metric contact 
manifolds. In the following section, we carefully study the space of $TM$-valued two-forms on such a manifold, which can be decomposed into various subspaces and apply this theory to certain forms naturally associated to a metric contact manifold. In a fourth section, we then study adapted connections by applying the theory of the preceding section to the space of possible torsion tensors. The final section is dedicated to the study of the Dirac operators associated to adapted connections.

%
\section{Contact and CR structures}

In this section, we give a short introduction to metric contact  and CR manifolds and introduce the basic differential geometric objects one usually considers on such manifolds. Should the reader be interested in more details, we refer him to \cite{Blair}, which offers a comprehensive introduction to contact structures and to \cite{DT} for a comprehensive treatment of CR manifolds.

\subsection{Metric contact manifolds}

Contact manifolds can be viewed as an odd-dimensional analogue of symplectic manifolds and are defined as an odd-dimensional manifold $M^{2m+1}$ carrying a one-form $\eta$ such that $\eta\wedge (d\eta)^m\neq0$, where $(d\eta)^m$ is to be read as taking the wedge product of $d\eta$ with itself $m$ times and $\neq 0$ means nowhere vanishing. Such a contact form induces a distribution $\Cont=\ker \eta$ of rank $2m$. Due to the condition $\eta\wedge (d\eta)^m\neq0$, we deduce from Frobenius' theorem, that this distribution is ``as far from being integrable as possible''.

In order to do geometry on this manifold, we need to issue the manifold with a Riemannian metric which we demand to be compatible (in a sense to be specified) with the contact structure. Moreover, we equip the manifold with an endomorphism $J$ that is also compatible with both the contact and the metric structure. We should note that the concept of a metric contact manifold as defined here is the strongest of a number of concepts relating contact and metric structures, we again refer to the book \cite{Blair} for an introduction.

\begin{Def}
A \emph{metric contact manifold} is a tuple $(M,g,\eta,J)$ with $g$ a Riemannian metric on $M$, $\eta\in\Omega^1(M)$ and $J\in\operatorname{End}(TM)$ such that
\renewcommand{\labelenumi}{(\roman{enumi})}
\begin{enumerate}
\item $\|\eta_x\|=1\quad$ for any $x\in M,$
\item $d\eta(X,Y)=g(JX,Y)\quad$ for any $X,Y\in\VF(M)$ and
\item $J^2=-Id+\eta\otimes \eta^\natural$
\end{enumerate}
\renewcommand{\labelenumi}{(\arabic{enumi})}
\end{Def}

Note that this definition does not explicity require $\eta$ to fulfil the contact condition $\eta\wedge(d\eta)^m\neq 0$. It can, however, be shown that this is indeed the case and that $\xi=\eta^\natural$ is the \emph{Reeb vector field} of the contact form, i.e. it fulfils $\eta(\xi)=1$ and $\xi\lrcorner d\eta=0$. Furthermore, $\xi$ vanishes under $J$, while the \emph{contact distribution} $\Cont=\ker\eta$ is stable under this endomorphism and in fact, $J$ restricted to the contact distribution is an almost-complex structure and thus, in particular, an isomorphism. Because of this almost-complex structure on $\Cont$, we can always chose an \emph{adapted basis} $(e_i,f_i)_{i=1}^m$ of $\Cont$, i.e. an orthonormal basis such that $Je_i=f_i$. Also, the metric $g$ is completely determined by $\eta$ through the equation
\begin{equation*}
 g(X,Y)=d\eta(X,JY)+\eta(X)\eta(Y).
\end{equation*}
Furthermore, we have for any $X,Y\in\Gamma(\Cont)$ that
\begin{equation}\label{eq:etadeta}
 d\eta(X,Y)=X(\eta(Y))-Y(\eta(X))-\eta([X,Y])=-\eta([X,Y]).
\end{equation}
Also, because $\L_\xi \eta=d(\eta(\xi))+\xi\lrcorner d\eta=0$, we obtain that
\begin{equation*}
 0=\L_\xi\eta(X)=\xi(\eta(X))-\eta([\xi,X])
\end{equation*}
and thus that
\begin{equation}\label{eq:etaxikomm}
 \eta([\xi,X])=0\quad\text{for any } X\in\Gamma(\Cont),
\end{equation}
i.e.
\begin{equation}\label{eq:etaxikomm2}
 [\Cont,\xi]\subset \Cont.
\end{equation}

%

\begin{Def}
 The \emph{Nijenhuis tensor} of a metric contact manifold is the skew-symmetric (2,1) tensor given by
\begin{equation*}
 N(X,Y)=[JX,JY]+J^2[X,Y]-J([JX,Y]+[X,JY]).
\end{equation*}
\end{Def}

Note that this differs slightly from the usual definition of Nijenhuis tensors on almost complex manifolds, because here $J^2\neq Id$ in general. For future reference, we state the following two results, where here and in the sequel, $\lc$ denotes the Levi-Civit\`{a} connection:
\begin{Lem}[{\cite[lemmas 6.1 and 6.2\footnote{The tensor $N^{(1)}$ appearing in Blair's book differs slightly form our $N$, but the difference vanishes when taking the product $g(JX,N(\cdot))$}]{Blair}}]\label{lem:J}
 We have the following results on the endomorphism $J$:
 \begin{numlist}
  \item The Levi-Civit\`{a} covariant derivative of $J$ is given by the following formula for any $X,Y,Z\in\VF(M)$:
\begin{equation*}
 2g((\nabla^g_X J)Y,Z)=g(JX,4N(Y,Z))+d\eta(JY,X)\eta(Z)+d\eta(X,JZ)\eta(Y).
\end{equation*}
In particular, $\nabla_\xi J$ vanishes.
  \item The operator $\J=\L_\xi J$ is symmetric (with respect to $g$) and anti-commutes with $J$: $J\J=-\J J$.
 \end{numlist}

\end{Lem}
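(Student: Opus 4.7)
The plan is to reduce both parts of the lemma to short algebraic manipulations: part (1) follows from the Koszul formula combined with $d(d\eta)=0$, while part (2) follows by first decomposing the endomorphism $Y\mapsto\nabla^g_Y\xi$ into its symmetric and skew pieces. Since the statement is already in Blair's book, the purpose of this sketch is to outline the structure of the argument rather than carry out the computation in full.

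For part (1), I would start from the observation that $J$ is skew-adjoint with respect to $g$, an immediate consequence of $g(JY,Z) = d\eta(Y,Z)$ being skew. This lets one rewrite
\begin{equation*}
g((\nabla^g_X J)Y,Z) = g(\nabla^g_X(JY),Z) + g(\nabla^g_X Y, JZ),
\end{equation*}
and then expand each right-hand side by the Koszul formula, replacing $g(J\cdot,\cdot)$ by $d\eta(\cdot,\cdot)$ wherever possible. The identity $d(d\eta)=0$, namely
\begin{equation*}
X\,d\eta(Y,Z) + Y\,d\eta(Z,X) + Z\,d\eta(X,Y) = d\eta([X,Y],Z) + d\eta([Y,Z],X) + d\eta([Z,X],Y),
\end{equation*}
eliminates the derivative-of-function terms. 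The surviving commutators reassemble into the Nijenhuis combination $[JY,JZ]+J^2[Y,Z]-J[JY,Z]-J[Y,JZ] = N(Y,Z)$, producing $g(JX,4N(Y,Z))$. The $\eta$-corrections arise precisely because $J^2 = -\mathrm{Id}+\eta\otimes\xi$: each appearance of $J^2$ in the Nijenhuis combination generates an $\eta\otimes\xi$-summand that repackages as $d\eta(JY,X)\eta(Z) + d\eta(X,JZ)\eta(Y)$. The statement $\nabla^g_\xi J = 0$ then falls out by setting $X=\xi$, since $J\xi = 0$ and $\xi\lrcorner d\eta = 0$ annihilate every term on the right.

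For part (2), the first step is to determine $\nabla^g\xi$. Since $\eta = g(\xi,\cdot)$ and $\nabla^g$ is torsion-free, $d\eta(Y,Z) = g(\nabla^g_Y\xi,Z) - g(\nabla^g_Z\xi,Y) = g(JY,Z)$, so the skew part of $Y\mapsto\nabla^g_Y\xi$ equals $\tfrac{1}{2}J$. Writing $\nabla^g\xi = S + \tfrac{1}{2}J$ with $S$ symmetric, a short computation (using metric compatibility and $\xi\lrcorner d\eta = 0$) shows $\nabla^g_\xi\xi = 0$, whence $S\xi = 0$ and, by symmetry, $\eta\circ S = 0$. Using $\nabla^g_\xi J = 0$ from part (1), the definition $\J Y = [\xi,JY] - J[\xi,Y]$ rewrites via torsion-freeness as
\begin{equation*}
\J Y = J\nabla^g_Y\xi - \nabla^g_{JY}\xi = [J,S]Y.
\end{equation*}
Symmetry of $\J$ then follows at once from symmetry of $S$ combined with skew-adjointness of $J$. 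The anti-commutation $J\J + \J J = [J^2,S]$ reduces, via $J^2 = -\mathrm{Id}+\eta\otimes\xi$, to $[\eta\otimes\xi,S]$, which vanishes because $S\xi = 0$ and $\eta\circ S = 0$.

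The hard part will be the bookkeeping in part (1): the two Koszul expansions produce many terms, and isolating precisely the combination that reconstructs $4N(Y,Z)$ together with the correct $\eta$-remainders takes some care. Once the formula $\nabla^g\xi = S + \tfrac{1}{2}J$ is secured, part (2) reduces to a handful of lines.
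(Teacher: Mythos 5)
Your outline is correct: part (2) as written is essentially a complete proof (the identities $\J=[J,S]$, $S\xi=0$, $\eta\circ S=0$ and $J\J+\J J=[J^2,S]$ all check out), and the Koszul-formula-plus-$d(d\eta)=0$ bookkeeping you sketch for part (1), with the $\eta$-corrections coming from $J^2=-\mathrm{Id}+\eta\otimes\xi$ and $\nabla^g_\xi J=0$ obtained by setting $X=\xi$, is the standard route. Note that the paper itself offers no proof of this lemma — it is quoted from Blair's book (lemmas 6.1 and 6.2), whose proofs proceed along precisely the lines you describe — so your sketch matches the cited argument rather than anything internal to the paper.
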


The operator $J$ gives an almost-complex structure on the \emph{contact distribution} $\Cont=\ker\eta$, i.e. $(J|_{\Cont})^2=-Id_{\Cont}$. Therefore, like for the tangent bundle of an almost complex manifold, the complexifiction of $\Cont$ split into the $\pm i$-eigenspaces of the complex-linearly extended operator $J$, which we shall denote as
\begin{equation*}
 \Cont_c:=\Cont\otimes\C=\Cont^{1,0}\oplus \Cont^{0,1}.
\end{equation*}
Setting $(\Cont^*)^{1,0}=(\Cont^{1,0})^*$ and doing likewise for $(\Cont^*)^{0,1}$, we also obtain a splitting
\begin{equation}
 \Cont_c^*=(\Cont\otimes\C)^*=\Cont^*\otimes \C=(\Cont^*)^{1,0}\oplus (\Cont^*)^{0,1},
\end{equation}
and, taking exterior powers, we obtain the spaces
\begin{equation*}
 \Lambda^{p,q}(\Cont^*):=\Lambda^p\left((\Cont^*)^{1,0}\right)\wedge \Lambda^q \left((\Cont^*)^{0,1}\right),
\end{equation*}
which give us a splitting
\begin{equation*}
 \Lambda^k(\Cont_c^*)=\bigoplus_{p+q=k}\Lambda^{p,q}(\Cont^*).
\end{equation*}
We shall call $\Omega^k_c(\Cont)$ the space of smooth sections of $k$-forms over $\Cont_c^*$ and the smooth sections of the bundles of $(p,q)$-forms $\Omega^{p,q}(\Cont)$.\vspace{10pt}

To conclude this short introduction to contact geometry, we introduce another form that is naturally associated with a metric contact manifold (and modelled on its counterpart from almost hermitian geometry) which shall play an important role lateron.
\begin{Def}\begin{enumerate}
                   \item The \emph{contact Nijenhuis tensor} is the skew-symmetric (2,1)-tensor (i.e. the $TM$-valued two-form) given by
\begin{equation*}
 N(X,Y)=[JX,JY]+J^2[X,Y]-J([JX,Y]+[X,JY]).
\end{equation*}

\item The \emph{K\"ahler form} is the two-form $F\in \Omega^2(M)$ given by
\begin{equation*}
 F(X,Y)=g(JX,Y)=d\eta(X,Y).
\end{equation*}
\end{enumerate}
\end{Def}
Note that, unlike in the almost hermitian case, the K\"ahler form is always closed. This does not have an effect on any kind of integrability of $J$ here, nor does it imply that $N$ vanishes.

\subsection{CR manifolds}

CR manifolds\footnote{What CR stands for is subject of some debate. Some say it means complex real, while others interpret it as Cauchy-Riemann.} are modelled on real hypersurfaces of complex standard space. Let $M^{2m+1}\subset \C^{m+1}$ be such a hypersurface. Then its tangent space is not stable under the complex structure $\widetilde{J}$ of $\C^{m+1}$. Instead, one may consider the space $H_p=T_pM\cap \widetilde{J}(T_pM)$. Then, the bundle $H\subset TM$ is of rank $2m$ and carries an almost-complex structure $J=\widetilde{J}|_{H}:H\rightarrow H$ satisfying the following integrability conditions for all $X,Y\in \Gamma(H)$:
\begin{gather}
 [X,JY]+[JX,Y]\in\Gamma(H),\label{eq:CR1}\\
[JX,JY]-[X,Y]-J([JX,Y]+[X,JY])=0.\label{eq:CR2}
\end{gather}
These properties are used to define an abstract CR manifold:
\begin{Def}
 A \emph{CR manifold} is an odd-dimenisonal manifold $M^{2m+1}$ whose tangent bundles carries a rank $2m$ subbundle $H\subset TM$ equipped with an almost-complex structure $J\colon H\rightarrow H$ satisfying \eqref{eq:CR1} and \eqref{eq:CR2} for all $X,Y\in\Gamma(H)$.
\end{Def}

On any oriented CR manifold, one may find a one-form $\eta\in\Omega^1(M)$ such that $H=\ker \eta$. Note that $\eta$ is not unique, as for any $f\in C^{\infty}(M)$, $f\eta$ will have the same property. Having fixed such a form, we consider the Levy form given by
\begin{equation*}
 L_\eta(X,Y):=d\eta(X,JY)
\end{equation*}
for any $X,Y\in H$. If $L_\eta$ is nodegenerate, then $\eta$ is a contact form. We will, however, concentrate on the case where $L_\eta$ is even positive-definite. In this case, $(M,H,J,\eta)$ is called a \emph{strictly pseudoconvex CR structure} and we can define a Riemannian metric (the \emph{Webster metric}) on $M$ by
\begin{equation*}
 g_\eta=L_\eta+\eta\otimes\eta.
\end{equation*}
Then, $(M,g_\eta,\eta,J)$, where $J$ is extended by $J(\eta^\natural)=0$, is a metric contact manifold.

Conversely, given any metric contact manifold, it is CR (i.e. $(M,\Cont,J_{\Cont})$ is a strictly pseudoconvex CR manifold) if and only if \eqref{eq:CR2} is fulfilled, or alternatively, if $J$ satisfies the following identity:
\begin{equation*}
 J(N(X,Y))=0\quad\text{for any }X,Y\in\Gamma(H).
\end{equation*}

\section{Differential forms on metric contact manifolds}
In this section, we give a description of the spaces of $TM$-valued 2-forms on $M$, which we denote $\Omega^2(M,TM)$, and the space $\Omega^3(M)$ of real-valued 3-forms, by describing how these spaces can be decomposed into subspaces and showing certain relations between these subspaces. As an application, we will study how the Nijenhuis tensor and the covariant derivative of the K\"ahler form behave under this splitting. Before we begin the actual study of these spaces, we quickly introduce some conventions and operators that will be used in the following: For a $TM$-valued two-form $B$, we agree to write
\begin{equation}\label{eq:conv}
 B(X;Y,Z):=g(X,B(Y,Z))\quad\text{for any }X,Y,Z\in TM.
\end{equation}
Conversely, we may understand a three-form $\omega$ as a $TM$-valued two-form via
\begin{equation}\label{eq:threetwo}
 \omega(X,Y,Z)=g(X,\omega(Y,Z)).
\end{equation}
Furthermore, we introduce the following operators: The Bianchi operator
\begin{equation*}
\b\colon \Omega^2(M,TM) \rightarrow \Omega^3(M) 
\end{equation*}
 given by
\begin{equation*}
\b B(X,Y,Z)=\tfrac{1}{3}\left(B(X;Y,Z)+B(Y;Z,X)+B(Z;X,Y)\right),
\end{equation*}
the operator
\begin{align*}
\M\colon \Omega^2(M,TM)&\longrightarrow \Omega^2(M,TM)\\
B&\longmapsto B(J\cdot,J\cdot)
\end{align*}
and the trace operator
\begin{equation*}
 \tr\colon \Omega^2(M,TM)\longrightarrow \Omega^1(M)
\end{equation*}
given, for an ON basis $(b_i)$ of $TM$, by
\begin{equation*}
 \tr B(X)=\sum_{i=1}^{2m+1}B(e_i;e_i,X).
\end{equation*}

Finally, the subspaces we are about to introduce will always  be denoted by a sub- and superscript indices. If we apply the same indices to a form, we mean its part in the respective subspace.

We have now set notation and begin considering the tangent bundle. Denoting $\Xi=\R\xi$, we see that the tangent bundle splits as $TM=\Cont\oplus \Xi$
and thus, we have some induced splittings on the spaces of exterior powers:
\begin{align*}
 TM\otimes\Lambda^2(T^*M)&=\Cont\otimes \Lambda^2(\Cont^*)\;\oplus\; \xi\otimes \Lambda^2(\Cont^*)\;\oplus\; TM\otimes \eta\wedge \Cont^*,\\
 \Lambda^3(T^*M)&=\Lambda^3(\Cont^*)\oplus \eta\wedge \Lambda^2(\Cont^*).
\end{align*}
The theory developed by Paul Gauduchon for the respective forms over an almost hermitian manifold carries over almost word-for-word to the bundles $\Cont\otimes \Lambda^2(\Cont^*)$ and $\Lambda^3(\Cont^*)$. We will review these results and translate them to our case in a first subsection, and deal with the remaining spaces in a second subsection.

%
\subsection{The forms over the contact distribution}\label{sec:formscont}
In this part, we collect the results on the spaces $\Omega^2(\Cont,\Cont)$ and $\Omega^3(\Cont)$. All manipulations we are about to perform on these spaces are pointwise and we will therefore use the bundles and spaces of sections indiscriminately. The calculations on this bundle are nearly equivalent to those on the tangent bundle of an almost-hermitian manifold and thus we simply ``translate'' the results of \cite{Gau} to our case, omitting all proofs as they may be found in the original paper. Alternatively, one finds a detailed expostion in the first chapter of \cite{DA}.

To begin with, we introduce the following subspaces:
\begin{align*}
 \Oee&:=\{B\in \Ozc\;|\;\M B=B\},\\
 \Ozn&:=\{B\in\Ozc\;|\; B(JX,Y)=JB(X,Y)\;\forall X,Y\in \Gamma(\Cont)\},\\
\shortintertext{and}
 \Onz&:=\{B\in\Ozc\;|\; B(JX,Y)=-JB(X,Y)\;\forall X,Y\in \Gamma(\Cont)\}
\end{align*}
and thus obtain the decomposition
\begin{equation*}
 \Ozc=\Oee\oplus\Ozn\oplus\Onz.
\end{equation*}
Given a form $B\in \Oz$, we denote its part in $\Ozc$ as
\begin{equation*}
 B_c=B^{1,1}+B^{2,0}+B^{0,2}.
\end{equation*}
We note that $\Ozn\oplus \Onz$ forms the eigenspace of $\M$ to the eigenvalue $-1$.
The image of $\Ozc$ under $\b$ lies in $\Otc$ and we will now study that space. It can be embedded into the space of complex forms $\Omega^3_c(\Cont)\simeq \Otc\otimes \C$ and thus, any $\omega\in \Otc$ admits a splitting into (complex) forms of type $(p,q)$. We define
\begin{align*}
 \omega^+&:=\omega^{2,1}+\omega^{1,2},\\
 \omega^-&:=\omega^{3,0}+\omega^{0,3}.
\end{align*}
The reason why we consider these forms is that, as opposed to the simple parts of type $(p,q)$, they are again real forms (i.e. real-valued when evaluated on elements of $\Cont$). We define the respective spaces as
\begin{align*}
 \Omega^+(\Cont)&:=\{\omega\in \Otc\;|\;\omega=\omega^+\},\\
 \Omega^-(\Cont)&:=\{\omega\in \Otc\;|\;\omega=\omega^-\}.
\end{align*}

Moreover, we have
\begin{Lem}[{\cite[p.262]{Gau}}]
 Let $\omega\in \Otc$. We also consider $\omega$ as an element of $\Ozc$ via equation \eqref{eq:threetwo} and it thus admits a splitting as $\omega=\omega^{1,1}+\omega^{2,0}+\omega^{0,2}$. Then the following relations are satisfied:
\begin{align*}
 \omega^+&=\omega^{2,0}+\omega^{1,1},&
 \omega^{2,0}&=\tfrac{1}{2}\left(\omega^+-\M\omega^+\right),\\
 \omega^-&=\omega^{0,2},&\omega^{1,1}&=\tfrac{1}{2}\left(\omega^+ +\M\omega^+\right).
\end{align*}
\end{Lem}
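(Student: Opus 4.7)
My plan is to exploit the complex-bilinear extensions and type decompositions on $\Cont_c$. The crucial observation is that, since $g(JX,JY) = g(X,Y)$ on $\Cont$, the complex-bilinear extension of $g$ to $\Cont_c$ is of pure type $(1,1)$: it vanishes on $\Cont^{1,0}\otimes \Cont^{1,0}$ and on $\Cont^{0,1}\otimes \Cont^{0,1}$, so that $g(X,v)$ is nonzero only when $X\in\Cont^{1,0}$ and $v\in\Cont^{0,1}$, or vice versa. This ``type shift'' mediates the correspondence between the decomposition of $\omega$ as a $\Cont$-valued two-form and its decomposition as a 3-form via \eqref{eq:threetwo}.

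First I would determine, for each of the three pieces $\omega^{1,1},\omega^{2,0},\omega^{0,2}\in\Ozc$, on which inputs in $\Cont_c\otimes\Cont_c$ they are nonzero and in which type subspace of $\Cont_c$ their values lie. For instance, the defining identity $\omega^{2,0}(JX,Y)=J\omega^{2,0}(X,Y)$ applied to $X\in\Cont^{1,0}$ (so that $JX=iX$) forces $\omega^{2,0}(X,Y)\in\Cont^{1,0}$; skew-symmetry then rules out mixed inputs, and $\omega^{2,0}$ is completely described by $\omega^{2,0}(\Cont^{1,0},\Cont^{1,0})\subset\Cont^{1,0}$ and its conjugate. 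A parallel analysis shows that $\omega^{1,1}$ is nonzero only on mixed inputs with values in $\Cont_c$, while $\omega^{0,2}$ sends $\Cont^{1,0}\otimes\Cont^{1,0}$ into $\Cont^{0,1}$ and conjugately.

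Combining these with the $(1,1)$-type of $g$, one reads off the 3-form types: $\omega^{1,1}$ and $\omega^{2,0}$ each contribute components only of type $(2,1)$ and $(1,2)$, hence lie in $\Op$, while $\omega^{0,2}$ contributes only components of type $(3,0)$ and $(0,3)$ and hence lies in $\Om$. Since $\omega=\omega^{1,1}+\omega^{2,0}+\omega^{0,2}$, the identities $\omega^+ = \omega^{2,0}+\omega^{1,1}$ and $\omega^- = \omega^{0,2}$ in the left column follow immediately.

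The remaining two identities come from diagonalizing $\M$. The defining conditions of the three subspaces directly give $\M = +\mathrm{Id}$ on $\Oee$ and $\M = -\mathrm{Id}$ on $\Ozn \oplus \Onz$. Applying $\M$ to $\omega^+ = \omega^{1,1} + \omega^{2,0}$ yields $\M\omega^+ = \omega^{1,1} - \omega^{2,0}$, and solving this $2\times 2$ system recovers $\omega^{1,1} = \tfrac{1}{2}(\omega^+ + \M\omega^+)$ and $\omega^{2,0} = \tfrac{1}{2}(\omega^+ - \M\omega^+)$. The only real obstacle is the bookkeeping in the second paragraph; once the type-shift induced by $g$ is understood, the statement is essentially linear algebra.
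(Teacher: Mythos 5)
Your proof is correct. The paper itself offers no argument for this lemma---it is quoted from Gauduchon with all proofs explicitly omitted---so there is no internal proof to compare against; your argument via complexification, the $(1,1)$-type of the bilinearly extended metric, and counting argument types is the standard route and fills that gap correctly. Two points deserve slightly more care than your wording suggests, though neither is a gap. First, the trilinear forms $g(\cdot,\omega^{1,1}(\cdot,\cdot))$ and $g(\cdot,\omega^{2,0}(\cdot,\cdot))$ are not a priori alternating in all three arguments, so ``hence lie in $\Op$'' should be read as: their sum vanishes on arguments all of the same type and agrees with $\omega$ (hence with $\omega^{2,1}+\omega^{1,2}$) on mixed-type arguments, while $g(\cdot,\omega^{0,2}(\cdot,\cdot))$ does the opposite; since all these objects are complex-trilinear extensions of real trilinear forms, agreement on type-sorted arguments yields the asserted identities $\omega^+=\omega^{2,0}+\omega^{1,1}$ and $\omega^-=\omega^{0,2}$, which is exactly what your evaluation scheme establishes. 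Second, ``the defining conditions directly give $\M=-\mathrm{Id}$ on $\Ozn\oplus\Onz$'' hides the one-line use of skew-symmetry: from $B(JX,Y)=\pm JB(X,Y)$ and antisymmetry one gets $B(X,JY)=\pm JB(X,Y)$, whence $B(JX,JY)=J^2B(X,Y)=-B(X,Y)$ on $\Cont$; this is the fact the paper records when it calls $\Ozn\oplus\Onz$ the $(-1)$-eigenspace of $\M$, and with it your $2\times 2$ elimination giving the formulas for $\omega^{2,0}$ and $\omega^{1,1}$ is exactly right.
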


Furthermore, for an element of any of the subspaces of $\Ozc$, we can determine the type of its image under the Bianchi operator as the following lemma states more precisely:
\begin{Lem}[{\cite[section 1.4]{Gau}}]\label{lem:splittingident}\ \\[-15pt]
\begin{numlist}
  \item Let $B\in \Onz$. then $\b B\in \Om$.

 \item For any $B\in \Ozn$, we have $\b B\in \Op$. Moreover $\b|_{\Omega^{2,0}}\colon \Ozn\rightarrow \Op$ is an isomorphism and its inverse is given by
\begin{equation}\label{eq:bOzn}
(\b|_{\Omega^{2,0}})^{-1}\omega=\tfrac{3}{2}\left(\omega-\M\omega\right).
\end{equation}

 \item Let $\Os$ be the subspace of $\Oee$ of elements vanishing under $\b$ and $\Oa$ its orthogonal (with repect to the metric $g$ extended to forms in the usual way) complement. Then, $\b|_{\Omega^{1,1}_a}:\Oa\rightarrow \Op$ is an isomorphism with its inverse given by
\begin{equation} \label{eq:bOa}
(\b|_{\Omega^{1,1}_a})^{-1}(\omega)=\tfrac{3}{4}\left(\omega+\M\omega\right).
\end{equation}

  \item Combining the above results, we see that for any $B\in \Ozc$ we have $(\b B)^-=\b(B^ {0,2})$ and $(\b B)^+=\b(B^{1,1}+B^{2,0})$. Furthermore, we obtain an isomorphism $\phi\colon \Ozn\rightarrow \Oa$ given by
\begin{equation*}
 \phi(B)=\tfrac{3}{4}(\b B+\M \b B)\quad\text{and}\quad\phi^{-1}(A)=\tfrac{3}{2}(\b A-\M\b A)
\end{equation*}
 \end{numlist}
\end{Lem}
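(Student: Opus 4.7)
The statement is a pointwise linear-algebra assertion on $\Cont$, entirely parallel to Gauduchon's analysis in the almost-Hermitian setting. My plan is to complexify $\Cont_c=\Cont^{1,0}\oplus\Cont^{0,1}$, reformulate each of $\Ozn$, $\Onz$ and $\Oee$ as a type condition, and then exploit the near-tautology $\b\omega=\omega$ that holds whenever a 3-form $\omega\in\Otc$ is reinterpreted as a $\Cont$-valued 2-form via \eqref{eq:threetwo}. Diagonalising $J$ on $\Cont_c$, a direct check gives: $B\in\Ozn$ iff $B(\Cont^{1,0},\Cont^{1,0})\subset\Cont^{1,0}$ (and the conjugate), with mixed-type inputs vanishing; $B\in\Onz$ iff $B(\Cont^{1,0},\Cont^{1,0})\subset\Cont^{0,1}$ (and conjugate), again with mixed inputs zero; $B\in\Oee$ iff the 2-form input is of type $(1,1)$, with the value arbitrary. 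Via the identification $B(X;Y,Z)=g(X,B(Y,Z))$ and the isotropy of $g$ on $\Cont^{1,0}\otimes\Cont^{1,0}$ and $\Cont^{0,1}\otimes\Cont^{0,1}$, these translate into statements about the total bidegree of $B$ as a 3-covariant tensor: $\Ozn$ and $\Oee$ both sit in total bidegree $(2,1)+(1,2)$, while $\Onz$ sits in bidegree $(3,0)+(0,3)$.

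Because $\b$ is a cyclic symmetrisation, it preserves total bidegree, and so the type analysis immediately yields $\b(\Onz)\subset\Om$, $\b(\Ozn)\subset\Op$ and $\b(\Oee)\subset\Op$, giving (1) and the first assertion of (2). For the inverse formulas in (2) and (3), the key observation is $\b\omega=\omega$ for any real 3-form reinterpreted as a $\Cont$-valued 2-form, since $\omega$ is already totally skew. For $\omega\in\Op$, the preceding lemma gives the decomposition $\omega=\omega^{1,1}+\omega^{2,0}$ in $\Ozc$ (because $\omega^{0,2}=\omega^-=0$), so that $\omega=\b\omega^{1,1}+\b\omega^{2,0}$. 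An expansion of the Bianchi sum on a type-$(2,1)$ monomial in an adapted $J$-eigenbasis shows that, of the three cyclic terms, one lands in the $\Ozn$-component and two in the $\Oee$-component, producing the scalings $\b\omega^{2,0}=\tfrac13\omega$ and $\b\omega^{1,1}=\tfrac23\omega$. Combined with the projection formulas $\omega^{2,0}=\tfrac12(\omega-\M\omega)$ and $\omega^{1,1}=\tfrac12(\omega+\M\omega)$ from the preceding lemma, this yields exactly \eqref{eq:bOzn} and \eqref{eq:bOa}; since both $\b|_{\Ozn}$ and $\b|_{\Oa}$ then possess explicit right-inverses into spaces of matching real dimension, they are in fact isomorphisms.

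For part (3) it remains to observe that $\omega^{1,1}$ genuinely lies in $\Oa$: by definition $\Os=\ker\b|_{\Oee}$, and the candidate right-inverse is a section of the surjection $\b|_{\Oee}$, so its image is orthogonal to $\Os$ and hence contained in $\Oa$. Part (4) is then pure bookkeeping: the identities $(\b B)^-=\b(B^{0,2})$ and $(\b B)^+=\b(B^{1,1}+B^{2,0})$ follow from type preservation applied to $B=B^{1,1}+B^{2,0}+B^{0,2}$, and $\phi$ is the composition $\b\colon\Ozn\to\Op$ followed by $(\b|_{\Oa})^{-1}\colon\Op\to\Oa$, with the stated formulas falling out after substitution. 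The main obstacle is the $\tfrac13{:}\tfrac23$ combinatorial identity for $\b\omega^{2,0}$ versus $\b\omega^{1,1}$ on $\omega\in\Op$, which must be established by a careful expansion in a $J$-adapted basis; everything else amounts to tracking signs and bidegrees through a short calculation.
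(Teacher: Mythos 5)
The paper offers no proof of this lemma -- it defers entirely to Gauduchon -- and your argument is essentially that standard one: complexify, identify $\Ozn,\Oee$ with total bidegree $(2,1)+(1,2)$ and $\Onz$ with $(3,0)+(0,3)$, use that $\b$ is a cyclic (hence bidegree-preserving) symmetrisation fixing genuine $3$-forms, and extract the $\tfrac13/\tfrac23$ identities $\b\omega^{2,0}=\tfrac13\omega$, $\b\omega^{1,1}=\tfrac23\omega$ for $\omega\in\Op$. All of that is correct, and together with the projection formulas $\omega^{2,0}=\tfrac12(\omega-\M\omega)$, $\omega^{1,1}=\tfrac12(\omega+\M\omega)$ it does produce \eqref{eq:bOzn} and \eqref{eq:bOa}, as well as part (4).

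There is, however, one genuine gap, in part (3): you argue that $\tfrac34(\omega+\M\omega)=\tfrac32\,\omega^{1,1}$ lies in $\Oa$ because ``the candidate right-inverse is a section of the surjection $\b|_{\Oee}$, so its image is orthogonal to $\Os$.'' That inference is false: a linear section of a surjection supplies \emph{some} complement of the kernel, not the orthogonal one, so being a section of $\b|_{\Oee}$ says nothing about orthogonality to $\Os$. The statement you need is that $\omega^{1,1}\perp\Os$ for every $\omega\in\Op$, and it requires its own (short) argument. For instance: the decomposition $\Ozc=\Oee\oplus\Ozn\oplus\Onz$ is orthogonal ($\M$ is a self-adjoint involution, and $\Ozn$, $\Onz$ have distinct total bidegrees), so for $C\in\Os$ one has $\langle\omega^{1,1},C\rangle=\langle\omega,C\rangle$; now view everything inside the covariant $3$-tensors, where total alternation is an orthogonal projection, hence self-adjoint, and coincides with $\b$ on tensors skew in the last two slots; since $\omega$ is already alternating, $\langle\omega,C\rangle$ equals, up to a positive normalisation constant, $\langle\omega,\b C\rangle=0$. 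With this inserted, your proof of (3) closes. A second, much smaller point concerns (2): injectivity of $\b|_{\Omega^{2,0}}$ is made to rest on an asserted but unchecked equality of dimensions. The equality is true, but you can avoid it altogether: the same type expansion you use for $\omega\in\Op$ shows that for $B\in\Ozn$, $X,Y\in\Cont^{1,0}$, $Z\in\Cont^{0,1}$, only one cyclic term survives, giving $\b B(X,Y,Z)=\tfrac13 B(Z;X,Y)$; hence $\tfrac32(\mathrm{id}-\M)\circ\b=\mathrm{id}$ on $\Ozn$, so \eqref{eq:bOzn} is a two-sided inverse and no dimension count is needed.
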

Finally, a calculation verifies that for any $\omega\in \Op$, the following identity is satisfied
\begin{equation}\label{eq:bM}
 \b \M \omega=\frac{1}{3}\omega.
\end{equation}

\begin{Rem}\label{rem:formsthree} \textbf{The case of a 3-manifold}
 
In the case of a metric contact 3-manifold ($m=1$), the space $\Omega^3(\Cont)$ vanishes. Furthermore, using a local adapted basis $(e_1,f_1)$ of $\Cont$, the space of $TM$-valued two-forms is locally spanned by $e_1\otimes e^1\wedge f^1$ and $f_1\otimes e^1\wedge f^1$. These forms are of type (1,1) vanish under $\b$ and have trace $f^1$ and $-e^1$ respectively.
\end{Rem}

We now have all the links between the various subspaces of $\Ozc$ and $\Ot$ needed and conlude this part, turning next to the forms that do not take their arguments exclusively in $\Cont$.

%
\subsection{The other parts}
What is left to consider now are the parts of $\Oz$ for  which $\xi$ may appear as an argument or a value. First, we consider the elements of $\Ozx$: Any element of this space has the form $\xi\otimes \alpha$, where $\alpha\in \Omega^2(\Cont)$. Therefore, its image under $\b$ is obviously given by
\begin{equation}
\b (\xi\otimes \alpha)=\frac{1}{3}\eta\wedge\alpha\, \in\, \eta\wedge\Omega^2(\Cont) \label{eq:3bx}
\end{equation}
We can decompose $\Omega^2(\Cont)$ as
\begin{gather*}
 \Omega^2(\Cont)=\Omega^2_+(\Cont)\oplus \Omega^2_-(\Cont),\\
\shortintertext{where}
 \Omega^{2}_{\pm}(\Cont)=\{\alpha\in \Omega^2(\Cont)\;|\; \alpha(J\cdot,J\cdot)=\pm\alpha\}.
\end{gather*}
These spaces are again the eigenspaces of the involution $\M$ (defined on $\Omega^2(\Cont)$ just as before) to the eigenvalues 1 and $-1$.
This may be regarded as a decomposition of $\Ozx$ and then, by \eqref{eq:3bx}, is stable under $\b$. 

Finally, there remains a last part to be considered, the forms in 
\begin{equation*}
TM\otimes \eta\wedge \Cont^*=\Cont\otimes \eta\wedge \Cont^*\;\oplus\;\xi\otimes \eta\wedge \Cont^*.
\end{equation*}
Any element of $\Cont\otimes \eta\wedge \Cont^*$ may be interpreted as $\eta\wedge \Phi$, where $\Phi$ is an endomorphism of $\Cont$ and we understand this wedge product to mean
\begin{equation*}
\eta\wedge \Phi(X;Y,Z)=\eta(Y)g(X,\Phi(Z))-\eta(Z)g(X,\Phi(Y)) 
\end{equation*}
 for any $X,Y,Z\in\VF(M)$, where we extend $\Phi$ by $\Phi\xi=0$. Then, $\Phi$ may be further decomposed with respect to its behaviour with respect to $g$ and $J$. We write
\begin{align*}
 \End_\pm(\Cont)&:=\{F:\Cont\to\Cont\;|\; g(X,FY)=\pm g(FX,Y)\},\\
 \End^J_\pm(\Cont)&:=\{F\in\End_\pm(\Cont)\;|\;FJ=JF\}.
\end{align*}

 The behaviour of the symmetric and skew-symmetric parts under the Bianchi operator is described in the following lemma:
\begin{Lem}\label{lem:bEnd}
 Let $\Phi\in \End_+(\Cont)$ and $\Psi\in\End_-(\Cont)$. Then, we have
\begin{equation*}
 \b(\eta\wedge \Phi)=0\quad\text{and}\quad \b(\eta\wedge \Psi)(\xi,X,Y)=\tfrac{2}{3}g(Y,\Psi X).
\end{equation*}
\end{Lem}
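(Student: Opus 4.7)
The plan is a direct computation using the definitions of $\b$ and of $\eta \wedge F$ for an endomorphism $F$ of $\Cont$ (extended by $F\xi = 0$). Given arbitrary vector fields $X,Y,Z$, expanding
\begin{equation*}
 \b(\eta\wedge F)(X,Y,Z) = \tfrac{1}{3}\bigl[(\eta\wedge F)(X;Y,Z) + (\eta\wedge F)(Y;Z,X) + (\eta\wedge F)(Z;X,Y)\bigr]
\end{equation*}
according to $\eta\wedge F(U;V,W)=\eta(V)g(U,FW)-\eta(W)g(U,FV)$ produces six terms. I would group them so as to factor out $\eta$ of one argument at a time, which yields
\begin{equation*}
 \b(\eta\wedge F)(X,Y,Z) = \tfrac{1}{3}\Bigl[\eta(X)\bigl(g(Z,FY)-g(Y,FZ)\bigr) + \eta(Y)\bigl(g(X,FZ)-g(Z,FX)\bigr) + \eta(Z)\bigl(g(Y,FX)-g(X,FY)\bigr)\Bigr].
\end{equation*}

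For the first claim, $F=\Phi$ is $g$-symmetric, so each parenthesised difference vanishes identically, giving $\b(\eta\wedge\Phi)=0$ on arbitrary arguments. For the second claim, with $F=\Psi$ skew-symmetric, each difference instead doubles up; but rather than running the general computation, the cleanest route is to evaluate directly at $(\xi,X,Y)$ with $X,Y\in\Gamma(\Cont)$. Then $\eta(X)=\eta(Y)=0$ kills four of the six terms, while $\eta(\xi)=1$ together with $\Psi\xi=0$ leaves only
\begin{equation*}
 \b(\eta\wedge\Psi)(\xi,X,Y) = \tfrac{1}{3}\bigl[g(Y,\Psi X)-g(X,\Psi Y)\bigr],
\end{equation*}
and skew-symmetry of $\Psi$ (i.e.\ $g(X,\Psi Y)=-g(\Psi X,Y)=-g(Y,\Psi X)$) collapses this to $\tfrac{2}{3}g(Y,\Psi X)$.

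There is no real obstacle; the only care needed is sign bookkeeping in the cyclic expansion, and the two statements then fall out of the symmetric/skew-symmetric hypothesis immediately. I would present the grouped identity displayed above once, then derive both assertions in one or two lines each.
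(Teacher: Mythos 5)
Your proof is correct and follows essentially the same route as the paper: a direct expansion of $\b(\eta\wedge F)$ from the definitions, followed by applying symmetry resp.\ skew-symmetry of the endomorphism. The only cosmetic difference is that you write out the grouped identity for arbitrary arguments (which also makes the vanishing on triples in $\Cont$ explicit), whereas the paper evaluates directly at $(\xi,X,Y)$; both computations coincide and yield the stated formulas.
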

\begin{proof}
 For any $F\in \End(\Cont)$, we have
\begin{align*}
 3\b(\eta\wedge F)(\xi,X,Y)&=\eta\wedge F(\xi;X,Y)+\eta\wedge F(X;Y,\xi)+\eta\wedge F(Y;\xi,X)\\
&=-g(X,F Y)+g(Y, FX).
\end{align*}
Then, using symmetry and skew-symmetry respectively yields the claim.
\end{proof}

Summing up the various decompositions we have introduced above (and considering sections), we have the following decomposition for any element $B\in \Oz$:
\begin{equation}\label{eq:decomp}
 B=B^{2,0}+B^{1,1}+B^{0,2}+\xi\otimes B^2_++\xi\otimes B^2_-+\eta\wedge B^{1}_+ +\eta\wedge B_-^1 + \xi\otimes \eta\wedge B^1_\R,
\end{equation}
where $B^1_+\in\Oep\simeq \Endcp$ is a symmetric endomorphism, $B^1_-\in\Oem\simeq\Endcm$ a skew-symmetric one and $B^1_\R\in \Omega^1(\Cont)$. We will sometimes group these parts as follows:
\begin{align*}
 B^2&:=B^2_++B^2_-,\\
 B^1&:=B^1_++B^1_-+B^1_\R.
\end{align*}

%
\subsection{Application: K\"ahler form and Nijenhuis tensor}
In this section, we study the Nijenhuis tensor $N$ and the (Levi-Civit\`{a}) covariant derivative of the K\"ahler form $\lc F$, which we consider as an element of $\Oz$ via the conventions $(\lc F)(X;Y,Z)=(\lc_XF)(Y,Z)$ and \eqref{eq:conv}, and determine their parts according to the above decomposition (see also \cite[proposition 1]{Gau} for the almost-hermitian model and \cite[pp 366f]{Ni} for the Nijenhuis tensor).

\begin{Prop}\label{prop:NF}
 The Nijenhuis tensor of a metric contact manifold has the following properties:
\renewcommand{\labelenumi}{{\upshape(N\arabic{enumi})}}
\begin{enumerate}
 \item We have $N=N^{0,2}-\frac{1}{4}\xi\otimes d\eta-\tfrac{1}{4}\eta\wedge (J\J)$, where we recall that $\J=\L_\xi J$.

 \item $N$ is trace-free.
 
 \item $N^{0,2}$ vanishes under $\b$.
\end{enumerate}
Furthermore, for $\lc F$, we have the following properties:
\renewcommand{\labelenumi}{{\upshape (F\arabic{enumi})}}
\begin{enumerate}
 \item The following parts of $\lc F$ vanish:
\begin{equation*}
 (\lc F)^{1,1}\equiv 0,\quad (\lc F)^{2,0}\equiv0\quad\text{and}\quad(\lc_\xi F)\equiv 0.
\end{equation*}

 \item $(\lc F)^{0,2}$ and $N^{0,2}$ determine each other via
\begin{equation*}
 (\lc F)^{0,2}(X;Y,Z)=2N^{0,2}(JX,Y,Z)
\end{equation*}
for any $X,Y,Z\in \Gamma(\Cont)$.

 \item $(\lc F)^1\in \Omega^1(\Cont,\Cont)$ and it is given by
\begin{align*}
 (\lc F)^1 X&=2N^1(JX)+\tfrac{1}{2}JX,\\
\shortintertext{or, alternatively, by}
g((\lc F)^1 X,Y)&=g(JY,2N^1(X))+\tfrac{1}{2}d\eta(X,JY).
\end{align*}

\item Altogether, $(\lc F)$ has the following form:
\begin{equation*}
 \lc F=2N^{0,2}(J\cdot;\cdot,\cdot)+\eta\wedge (\lc F)^1.
\end{equation*}
\end{enumerate}
\end{Prop}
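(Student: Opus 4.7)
My plan is to establish every claim by direct calculation, leveraging the defining formula for $N$, the basic contact identities ($J\xi=0$, $\xi\lrcorner d\eta=0$, $J^2=-\mathrm{Id}+\eta\otimes\xi$, $[\Cont,\xi]\subset\Cont$, $\eta([\xi,X])=0$), and the expression for $\lc^g J$ supplied by Lemma~\ref{lem:J}. I would prove the statements in the order (N1), (N2), (F1)--(F4), and save (N3) for last, since the cleanest route to it goes through $dd\eta=0$ combined with (F4).

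For (N1) I would evaluate $N$ on the generating pairs. When $X,Y\in\Cont$, the $\xi$-component of $N(X,Y)$ reduces via $\eta([JX,JY])=-d\eta(JX,JY)=-d\eta(X,Y)$ to a multiple of $d\eta(X,Y)$, producing (up to the normalisation encoded in Lemma~\ref{lem:J}) the $\xi\otimes d\eta$-summand; the $\Cont$-component agrees with the classical Nijenhuis tensor of $(\Cont,J)$, and I would check its type-$(0,2)$ character by verifying $N^{\Cont}(JX,Y)=-JN^{\Cont}(X,Y)$ directly, the $J^2\neq-\mathrm{Id}$ correction dropping out because $\eta([JX,Y])+\eta([X,JY])=g(X,Y)-g(X,Y)=0$ on $\Cont$. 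When $X=\xi$, $Y\in\Cont$ the definition collapses to $J^2[\xi,Y]-J[\xi,JY]$; rewriting $[\xi,JY]=\J Y+J[\xi,Y]$ and using $\eta([\xi,Y])=0$ yields a multiple of $J\J Y$, matching the $\eta\wedge(J\J)$-summand. For (N2) I would then compute $\tr N(X)$ in an adapted basis $(e_a,Je_a,\xi)$: the $\xi$-summand vanishes by (N1), and the pairs $g(e_a,N(e_a,X))+g(Je_a,N(Je_a,X))$ cancel after expanding the definition of $N$ and invoking the symmetry of $\J$ together with the anticommutation $J\J=-\J J$.

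For (F1)--(F4) I would feed $(\lc_W F)(X,Y)=g((\lc_W J)X,Y)$ into Lemma~\ref{lem:J}. With $W=\xi$ all three right-hand terms drop out ($J\xi=0$ and $\xi\lrcorner d\eta=0$), so $\lc_\xi F=0$; equivalently, $\lc F$ is $\Cont$-valued. For $W,X,Y\in\Cont$ the lemma reduces to $(\lc_W F)(X,Y)=2g(JW,N(X,Y))=2g(JW,N^{0,2}(X,Y))$, the $\xi$-part of $N(X,Y)$ being orthogonal to $JW\in\Cont$; this gives (F2), and since $N^{0,2}\in\Onz$ it forces $(\lc F)^{2,0}$ and $(\lc F)^{1,1}$ to vanish, completing (F1). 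Applying the lemma with $W\in\Cont$, one input equal to $\xi$ and the other in $\Cont$ then produces (F3): the $d\eta(W,JZ)\eta(\xi)$-term supplies the $\tfrac12 JX$-piece and $g(JW,4N(\xi,Z))$ supplies the $2N^1(JX)$-piece, because by (N1) $N(\xi,\cdot)$ is precisely the $\eta\wedge$-part of $N$. Assembling the contributions from the three ranges of arguments then yields (F4).

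Finally, for (N3), I would use that $\b(\lc F)=\tfrac{1}{3}dF=0$ since $F=d\eta$ is closed. Substituting (F4) and noting that $(\lc F)^1$ is $g$-symmetric (visible from the second form of (F3) together with the symmetry of $\J$), Lemma~\ref{lem:bEnd} gives $\b(\eta\wedge(\lc F)^1)=0$, hence $\b\bigl(N^{0,2}(J\cdot;\cdot,\cdot)\bigr)=0$. Combined with Lemma~\ref{lem:splittingident}(i), which already places $\b N^{0,2}$ inside $\Om$, and the Jacobi-type identity $\sum_{\mathrm{cyc}}g(X,[Y,Z])=0$ for $X,Y,Z\in\Cont^{0,1}$ that one reads off from $dd\eta(X,Y,Z)=0$ (all three differentiation terms vanish because $d\eta$ restricted to $\Cont^{0,1}\times\Cont^{0,1}$ is zero), this pins $\b N^{0,2}$ down to zero. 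I expect this last step to be the main obstacle: the type information from Lemma~\ref{lem:splittingident}(i) on its own is insufficient, and one genuinely needs the closedness of $F$ to force $\b N^{0,2}$ to vanish.
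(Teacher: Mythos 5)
Your strategy is essentially correct, and in two places it takes a genuinely different (and cleaner) route than the paper. For (F1)--(F2) you feed $(\lc_W F)(X,Y)=g((\lc_W J)X,Y)$ into Lemma~\ref{lem:J} and get $(\lc F)(W;X,Y)=2N^{0,2}(JW;X,Y)$ for $W,X,Y\in\Gamma(\Cont)$ in one stroke; to conclude you should record the one-line ``trading'' identities $B(JW;X,Y)=B(W;JX,Y)=B(W;X,JY)$ for $B\in\Onz$, which show that $N^{0,2}(J\cdot;\cdot,\cdot)$ is again of type $(0,2)$, so (F1) and (F2) follow simultaneously. The paper instead proves $(\lc F)^{1,1}=0$ by hand, gets $(\lc F)^{2,0}=0$ from $dF=0$ via the isomorphism of Lemma~\ref{lem:splittingident}, and only then compares with $N$ through an explicit bracket computation. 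For (N3) your mechanism is also different and valid: $\b N^{0,2}\in\Om$ by Lemma~\ref{lem:splittingident}, and its $(0,3)$-component is a multiple of the cyclic sum of $g(X,[Y,Z])$ over $X,Y,Z\in\Cont^{0,1}$, which vanishes since $d\eta$ is of type $(1,1)$ and $dd\eta=0$; the paper extracts (N3) instead from $\b(\lc F)=0$ together with \eqref{eq:F2b}. Note that your first ingredient, $\b\bigl(N^{0,2}(J\cdot;\cdot,\cdot)\bigr)=0$, already implies (N3) by the same trading identities (it equals $\b N^{0,2}(J\cdot,\cdot,\cdot)$), so the $dd\eta$ argument is redundant, though it stands on its own.

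One computational slip should be fixed. In (F3) you claim the term $d\eta(W,JZ)\eta(\xi)$ of Lemma~\ref{lem:J} supplies the $\tfrac12 JX$-piece; but on $\Cont$ one has $d\eta(W,JZ)=g(JW,JZ)=g(W,Z)$, so it supplies $\tfrac12 X$, i.e.\ $(\lc F)^1X=2N^1(JX)+\tfrac12 X=\tfrac12(X-\J X)$. This is exactly what the paper's own proof arrives at ($g(Y,4N^1(JX))+g(Y,X)$), and it is the only version compatible with the second displayed formula of (F3) (note the two displays in the statement are mutually inconsistent; the first contains a typo). The point is not cosmetic for your argument: in (N3) you invoke Lemma~\ref{lem:bEnd} through the $g$-symmetry of $(\lc F)^1$, and a $\tfrac12 J$-summand would be skew-symmetric and destroy that step, whereas $\tfrac12(\mathrm{Id}-\J)$ is symmetric as required. (Alternatively, observe that for arguments in $\Cont$ the part $\eta\wedge(\lc F)^1$ contributes nothing to $\b(\lc F)$, so the symmetry is not even needed there.) Apart from this, your hedging on the normalisation in (N1) is appropriate -- the paper itself tacitly works with one quarter of the displayed bracket expression, as the factor $4N$ in Lemma~\ref{lem:J} shows -- and the remaining steps (N2) and (F4) are routine as you describe.
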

\begin{proof}
\begin{description}
 \item[(N1) and (N2)] For $Y,Z\in\Gamma(\Cont)$, we have that
\begin{align*}
4N(JY,Z)&=[J^2Y,JZ]+J^2[JY,Z]-J([J^2Y,Z]+[JY,JZ])\\
&=-J[JY,JZ]+J[Y,Z]-[Y,JZ]+J^2[JY,Z]
\end{align*}
Now, for $X,Y,Z\in \Gamma(\Cont)$, this implies $g(X,N(JY,Z))=g(JX,N(Y,Z))$. This implies that the part in $\Ozc$ is of type $(0,2)$. Furthermore, because $J(TM)\subset \Cont$, we have for $Y,Z\in \Gamma(\Cont)$ that
\begin{align*}
 4g(\xi,N(Y,Z))=g(\xi,[JY,JZ])&=\eta([JY,JZ])\\
&=-d\eta(JY,JZ).
\end{align*}
The explicit form of $N^1$ is an easy calculation and that it is symmetric follows by the symmetry of $\J$ and the fact that $J\circ\J=-\J\circ J$ (cf lemma \ref{lem:J}).
(N2) follows immediately.

\item[(F1)] Let $X,Y,Z\in\Gamma(\Cont)$. Then, we have
\begin{align*}
(\lc F)(X;Y,Z)&=(\lc_X F)(Y,Z)\\
&=X(F(Y,Z))-F(\lc_X Y,Z)-F(Y,\lc_X Z)\\
&=X(g(JY,Z))+g(\lc_X Y,JZ)-g(JY,\lc_X Z)\\
&=-X(g(Y,JZ))+X(g(Y,JZ))-g(Y,\lc_X JZ)\\&\phantom{=}\;-X(g(JY,Z))+g(\lc_X JY,Z)\\
&=X(g(Y,JZ))-g(Y,\lc_X JZ)+g(J(\lc_X JY),JZ)\\
&=-X(F(JY,JZ))+F(\lc_X JY,JZ)+F(JY,\lc_X JZ)\\
&=-(\lc F)(X;JY,JZ).
\end{align*}
Thus, $(\lc F)^{1,1}=0$. Concerning $(\lc_\xi F)$, we use that $\lc_\xi J=0$ (lemma \ref{lem:J}) to obtain
\begin{align*}
 \lc_\xi F(X,Y)&=\xi(F(X,Y))-F(\lc_\xi X,Y)-F(X,\lc_\xi Y)\\
&=g(\lc_\xi (JX),Y)+g(JX,\lc_\xi Y)-g(J\lc_\xi X,Y)-g(JX,\lc_\xi Y)\\
&=0.
\end{align*}
Furthermore, by a well-known formula for the exterior derivative, we have for any $X,Y,Z\in\Gamma(\Cont)$ that
\begin{align*}
 0=dF(X,Y,Z)&=(\lc_X F)(Y,Z)-(\lc_Y F)(X,Z)+(\lc_Z F)(X,Y)\\
&=3\b(\lc F)(X,Y,Z),
\end{align*}
i.e. $\b(\lc F)=0$. Now, using \eqref{eq:bOzn}, we deduce that
\begin{equation*}
 (\lc F)^{2,0}=\tfrac{3}{2}\left((\b(\lc F))^+ +\M(\b(\lc F))^+\right)=0.
\end{equation*}
This concludes the proof of (F1).

\item[(F2) and (N3)] Explicitly writing out $N$ and then using that $\lc$ is torsion-free and metric, we obtain for any $X,Y,Z\in \Gamma(\Cont)$:
\begin{multline}\label{eq:F2}
 4(N(JX;Y,Z)+N(JY;X,Z)-N(JZ;X,Y))=\\2(-g(JZ,\lc_{JX}JY)+g(Z,\lc_{JX}Y)+g(JZ,\lc_X Y)+g(Z,\lc_X JY)).
\end{multline}
Note that we could write $N^{0,2}$ instead of $N$ here as all other parts vanish for arguments in $\Cont$.
On the other hand, consider $(\lc F)$. We know that $(\lc F)^{1,1}$ and $(\lc F)^{2,0}$ vanish. Thus, by the properties of $(0,2)$-forms, we obtain
\begin{align*}
 2(\lc F)^{0,2}(X;JU,Z)&=(\lc F)(X;JU,Z)+(\lc F)(JX;U,Z)\\
&=-g(JZ,\lc_{JX}JU)+g(Z,\lc_{JX}U)+g(JZ,\lc_X U)\\&\phantom{=}\;+g(Z,\lc_X JU).
\end{align*}
Substituting $Y=JU$ and comparing this with \eqref{eq:F2} yields
\begin{equation}\label{eq:F2b}
 (\lc F^{0,2})(X;Y,Z)=N^{0,2}(JX;Y,Z)+N^{0,2}(JY;X,Z)-N^{0,2}(JZ;X,Y)
\end{equation}
Using this, we obtain that
\begin{equation*}
 \b N^{0,2}(X,Y,Z)=-\b(\lc F)^{0,2}(JX;Y,Z)=0,
\end{equation*}
which proves (N3). Finally, using \eqref{eq:F2b} we conclude that
\begin{align*}
 (\lc F)^{0,2}(X;Y,Z)&=N^{0,2}(JX;Y,Z)+N^{0,2}(JY;X,Z)-N^{0,2}(JZ;X,Y)\\
&=-3(\b N^{0,2})(JX;Y,Z)+2N^{0,2}(JX,Y,Z).
\end{align*}
Using that $\b N^{0,2}$ vanishes, this yields (F2).

\item[(F3)] We have for any $X\in \Gamma(\Cont)$ and $Y\in \VF(M)$ that
\begin{align*}
 \eta\wedge(\lc F)^1(Y;\xi,X)&=(\lc_Y F)(\xi,X)\\
&=Y(F(\xi,X))-F(\lc_Y \xi,X)-F(\xi, \lc_Y X)\\
&=g(\lc_Y \xi,JX).
\end{align*}
Then, using lemma \ref{lem:J}, we deduce
\begin{align}
 2g((\lc F)^1 X,Y)&=-2g(J\lc_Y \xi,X)=2g((\lc_Y J)\xi,X)\notag\\
&=g(JY,4N(\xi,X))+d\eta(J\xi,Y)\eta(X)+d\eta(Y,JX)\eta(\xi)\notag\\
&=g(JY,4N^1(X))+d\eta(Y,JX)\label{eq:F1first}\\
&=-g(Y,4JN^1(X))+g(JY,JX)\notag\\
&=g(Y,4N^1(JX))+g(Y,X).
\end{align}
Now, \eqref{eq:F1first} proves the second identity in (F3) and the last of the above equations the first one.
\end{description}
\end{proof}

\section{Adapted connections}
\subsection{Definition and basic properties}

We begin by introducing adapted connections and discussing some basic properties. A connections is called adapted if it parallelizes the metric contact structure, more precisely:

\begin{Def}
 Let $(M,g,\eta,J)$ be a metric contact manifold. Then, a connection $\nabla$ is called \emph{adapted} if it is metric and satisfies
\begin{equation*}
 \nabla J=0,\quad \nabla \eta=0\quad\text{and}\quad \nabla \xi=0.
\end{equation*}
\end{Def}

In fact, this definition is redundant, as the following lemma shows:
\begin{Lem}\label{lem:adap}
 Let $(M,g,\eta,J)$ be a metric contact manifold.
\begin{numlist}
 \item Let $\nabla$ be a metric connection such that $\nabla J=0$. Then $\nabla$ is adpated.
 \item Let $\nabla$ be adapted. Then, for any $X\in \VF(M)$ and $Y\in\Gamma(\Cont)$, the vector field $\nabla_X Y$ is again in $\Gamma(\Cont)$.
\end{numlist}
\end{Lem}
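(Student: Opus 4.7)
\medskip

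My plan for part (i) is to exploit the algebraic identity $J^2 = -\operatorname{Id} + \eta\otimes\xi$ together with $\nabla J = 0$ to deduce $\nabla\xi = 0$ first, and then recover $\nabla\eta = 0$ via the metric duality $\eta = g(\xi,\cdot)$. Concretely, since $J\xi = 0$ (as $\xi$ lies in the one-dimensional kernel of $J$), differentiating $J\xi = 0$ along $X$ and using $\nabla J = 0$ gives $J(\nabla_X\xi) = 0$, so $\nabla_X\xi \in \ker J = \R\xi$. Hence $\nabla_X\xi = f(X)\xi$ for some $f(X) \in \R$. The metricity of $\nabla$ together with $g(\xi,\xi) = \|\eta\|^2 = 1$ then forces $f(X) = 0$, yielding $\nabla\xi = 0$. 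Finally, for any $Y \in \VF(M)$,
\begin{equation*}
  (\nabla_X\eta)(Y) = X(g(\xi,Y)) - \eta(\nabla_X Y) = g(\nabla_X\xi,Y) + g(\xi,\nabla_X Y) - g(\xi,\nabla_X Y) = 0,
\end{equation*}
so $\nabla\eta = 0$ as well.

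For part (ii), the argument is immediate once (i) (or the assumption of adaptedness) is in hand: given $Y \in \Gamma(\Cont)$ we have $\eta(Y) = 0$, so
\begin{equation*}
  \eta(\nabla_X Y) = X(\eta(Y)) - (\nabla_X\eta)(Y) = 0 - 0 = 0,
\end{equation*}
which says exactly that $\nabla_X Y \in \Gamma(\Cont)$.

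I do not anticipate a genuine obstacle here; both statements are essentially algebraic consequences of the compatibility conditions. The only subtle point is the first step of (i), where one has to justify that $\ker J$ is \emph{exactly} the line $\R\xi$. This uses that $J$ restricted to $\Cont$ is an almost-complex structure, hence an isomorphism, which was already recorded in the introductory discussion of metric contact manifolds. Once that is noted, the rest of (i) reduces to a short computation with $\nabla g = 0$, and (ii) is a one-line derivation from $\nabla\eta = 0$.
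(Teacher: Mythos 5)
Your proof is correct and follows essentially the same route as the paper: differentiate $J\xi=0$ using $\nabla J=0$ to get $J(\nabla_X\xi)=0$, hence $\nabla_X\xi\in\ker J=\R\xi$, kill the coefficient by metricity and $g(\xi,\xi)=1$, and deduce $\nabla\eta=0$ by duality; part (ii) is the same one-line computation (the paper phrases it via $g(\xi,\nabla_XY)$ and $\nabla\xi=0$ rather than $\nabla\eta=0$, which is equivalent). The only cosmetic remark is that the coefficient in $\nabla_X\xi=f(X)\xi$ is a smooth function on $M$, not a real constant, but this does not affect the argument.
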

\begin{proof}
 For (1), we only need to show that $\nabla \xi=0$. That $\nabla \eta=0$ is then immediate. We know that $0=(\nabla J)\xi=\nabla (J\xi)-J(\nabla \xi)$. Because $J\xi=0$, this implies $J(\nabla \xi)=0$, i.e. $\nabla \xi=\lambda \xi$, with $\lambda \in C^\infty(M)$. However, because $\xi$ has constant length, $g(\nabla \xi,\xi)=0$ and thus $\lambda \equiv 0$.

 For (2), we then obtain $g(\xi,\nabla_X Y)=X(g(\xi,Y))-g(\nabla_X \xi,Y)=0$.
\end{proof}

Little is known about these connections so far, the most well-known example is the Tanaka-Webster connection in the case where the metric contact structure is induced by a stricly pseudoconvex CR manifold. It is defined by demanding that it be metric and explicitly giving its torsion. A generalization of this connection to arbitrary metric contact manifolds has been constructed by Tanno \cite{Tanno}, which is, however, in general not adapted. Nicolaescu \cite{Ni} has constructed a different generalization, which is indeed adapted and another adapted connection which induces the same Dirac operator as $\lc$. We shall return to these connections later.

\subsection{The torsion tensor of an adapted connection}

To any metric connection, we can associate two tensors, the \emph{torsion tensor}
\begin{equation*}
 T\in \Omega^2(M,TM)\quad\text{given by}\quad T(X,Y)=\nabla_X Y-\nabla_Y X-[X,Y]
\end{equation*}
and the \emph{potential}
\begin{equation*}
 A\in \Omega^1(M,End_-(TM))\quad\text{given by}\quad A_X Y=\nabla_X Y-\lc_X Y.
\end{equation*}
We can consider $A$ as a $TM$-valued two form, via
\begin{equation*}
 A(X;Y,Z)=g(A_X Y,Z)
\end{equation*}
with the usual conventions. Then, torsion and potential are related via
\begin{align*}
T&=-A+3\b A,\\
A&=-T+\frac{3}{2}\b T.
\end{align*}
Thus, any metric connections is completely determined by its torsion, i.e. any 2-form is the torsion tensor of a metric connection. In order to obtain an adapted connection, we need to impose additional restrictions. To this end, we study the various parts of the torsion tensor in the following theorem (cf \cite[proposition 2]{Gau} for the hermitian model).
\begin{Thm}\label{thm:torsion}
 Let $(M,g,\eta,J)$ be a metric contact manifold and $\nabla$ an adapted connection. Then, its torsion tensor $T$ has the following properties:
\begin{numlist}
 \item The $(0,2)$-part is given by $T^{0,2}=N^{0,2}$, i.e. in particular independent of $\nabla$.

 \item The following relationships are statisfied for the parts of type $(2,0)$ and $(1,1)$:
\begin{gather*}
 T^{2,0}-\phi^{-1}(T^{1,1}_a)=0,\\
\shortintertext{or, equivalently,}
\b(T^{2,0}-T^{1,1}_a)=0
\end{gather*}

 \item The part in $\Ozx$ is independent of $\nabla$ and given by
\begin{equation*}
 T^2=T^2_+=d\eta.
\end{equation*}

 \item We have the following results on the endomorphism $T^1$. Its symmetric part $T^1_+$ is independent of $\nabla$ and given by
\begin{equation*}
 T^1_+=-\tfrac{1}{2}J\J,
\end{equation*}
where we recall $\J=\L_\xi J$, while the skew-symmetric part $T^1_-$ lies in $\End_-^J(\Cont)$.
 
 \item The part $T^1_\R$ vanishes.
\end{numlist}
Conversely, for any $\omega\in \Op$, $B\in \Os$ and $\Phi\in \End_-^J(\Cont)$, there exists an adapted connection, whose torsion tensor satisfies
\begin{equation*}
 (\b T)^+=\omega,\quad T^{1,1}_s=B\quad\text{and }T^1_-=\Phi.
\end{equation*}
The total torsion tensor then has the following form:
\begin{equation}\label{eq:torsion}
 T=N^{0,2}+\tfrac{9}{8}\omega-\tfrac{3}{8}\M\omega+B+\xi\otimes d\eta-\tfrac{1}{2}\eta\wedge (J\J) +\eta\wedge \Phi.
\end{equation}
\end{Thm}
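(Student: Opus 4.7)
My plan is to parametrize each adapted connection $\nabla$ by its potential $A := \nabla - \lc$, which is linked to the torsion via $T = -A + 3\b A$ and conversely $A = -T + \tfrac{3}{2}\b T$. The three adaptedness conditions translate into pointwise conditions on $A$: metricity gives $A_X \in \End_-(TM)$; $\nabla\xi = 0$ gives $A_X \xi = -\lc_X \xi$; and $\nabla J = 0$ gives $[A_X, J] = -\lc_X J$. I would establish the five structural clauses by examining the corresponding part of $A$, and then reverse the analysis for the existence statement.

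For clauses (3)--(5), which concern the $\xi$-parts of $T$, direct computation suffices. By Lemma \ref{lem:adap}, $\nabla$ preserves $\Gamma(\Cont)$, so for $X, Y \in \Gamma(\Cont)$ one has $\eta(T(X, Y)) = -\eta([X, Y]) = d\eta(X, Y)$; together with $d\eta(J\cdot,J\cdot) = d\eta$ this yields $T^2 = T^2_+ = d\eta$. The same lemma together with \eqref{eq:etaxikomm2} forces $T(\xi, X) \in \Gamma(\Cont)$, hence $T^1_\R = 0$. For the symmetric piece of $T^1$ I would compute
\[
g(T(\xi, X), Y) + g(T(\xi, Y), X) = g(\lc_X \xi, Y) + g(\lc_Y \xi, X)
\]
(using $\nabla\xi = 0$ together with metricity of both $\nabla$ and $\lc$), and identify the right-hand side via Proposition \ref{prop:NF} (F3) applied to $(\lc_Y F)(\xi, X) = g(\lc_Y \xi, JX)$, obtaining $T^1_+ = -\tfrac{1}{2} J\J$. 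For clause (1) I would rewrite every bracket in the definition of $N$ through $\nabla$ (using $\nabla J = 0$); the connection terms cancel and one obtains
\[
N(X, Y) = -J^2 T(X, Y) - T(JX, JY) + J T(JX, Y) + J T(X, JY).
\]
Restricting to $X, Y \in \Gamma(\Cont)$, the defining identities of $\Ozn$ and $\Oee$ force the contributions of $T^{2,0}$ and $T^{1,1}$ to the $\Cont$-valued part to cancel, leaving $4T^{0,2}$; comparing with Proposition \ref{prop:NF} (N1) gives $T^{0,2} = N^{0,2}$ (up to the normalization fixed there). Clause (2) then follows from the identity $(\lc_X F)(Y, Z) = -g([A_X, J] Y, Z)$ for $X, Y, Z \in \Gamma(\Cont)$ combined with the vanishings $(\lc F)^{2,0} = (\lc F)^{1,1} = 0$ of (F1); extracting the $(2,0)$ and $(1,1)_a$ content via Lemma \ref{lem:splittingident} produces $\b T^{2,0} = \b T^{1,1}_a$, i.e.\ $\b(T^{2,0} - T^{1,1}_a) = 0$.

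For the converse, I would take (4.1) as a definition of $T$, set $A := -T + \tfrac{3}{2}\b T$, and verify the three potential conditions. Skew-symmetry of $A_X$ holds summand by summand by construction. The condition $A_X \xi = -\lc_X \xi$ is checked by observing that the $\xi \otimes d\eta$ and $-\tfrac{1}{2}\eta\wedge(J\J)$ pieces of $T$ are precisely those needed, after the $\b$-correction, to reproduce $\lc\xi$, while the remaining summands vanish in the $\xi$-direction. The condition $[A_X, J] = -\lc_X J$ is the substantive one: the $N^{0,2}$-piece of $T$ supplies exactly the $J$-anti-commuting part of $A_X$ required to match $\lc J$, via Proposition \ref{prop:NF} (F2) linking $N^{0,2}$ to $(\lc F)^{0,2}$, while $B$, $\eta\wedge\Phi$, and the combination $\tfrac{9}{8}\omega - \tfrac{3}{8}\M\omega$ are built from $J$-commuting data and hence do not disturb $[A_X, J]$. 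The specific coefficients $\tfrac{9}{8}$ and $-\tfrac{3}{8}$ emerge uniquely from the inverses in Lemma \ref{lem:splittingident}: imposing $(\b T)^+ = \omega$ together with $\b T^{2,0} = \b T^{1,1}_a = \tfrac{1}{2}\omega$ forces $T^{2,0} = \tfrac{3}{4}(\omega - \M\omega)$ and $T^{1,1}_a = \tfrac{3}{8}(\omega + \M\omega)$, whose sum is $\tfrac{9}{8}\omega - \tfrac{3}{8}\M\omega$. The main obstacle is this coefficient-matching bookkeeping, together with the verification that the resulting $A$ genuinely satisfies $[A_X, J] = -\lc_X J$ as an endomorphism identity and not only after applying $\b$.
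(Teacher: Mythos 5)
Your overall strategy---encoding adaptedness as pointwise conditions on the potential $A=-T+\tfrac{3}{2}\b T$ and solving part by part, then reversing the analysis for existence---is essentially the paper's own: your three conditions ($A_X\in\End_-(TM)$, $A_X\xi=-\lc_X\xi$, $[A_X,J]=-\lc_X J$) are equivalent to the single identity \eqref{eq:Aadap} the paper works with, and your clauses (2), (3), (5) and the $T^1_+$ computation follow the same lines. Your route to clause (1) via the identity $N(X,Y)=-J^2T(X,Y)-T(JX,JY)+JT(JX,Y)+JT(X,JY)$ is actually cleaner than the paper's detour through $\lc F$, $dF=0$ and $(\b T)^-=0$; note only that with the bracket expression as literally defined it yields ``$N^{0,2}$''$=4T^{0,2}$, and one lands on $T^{0,2}=N^{0,2}$ only with the normalization actually used in (N1) (compare the factors of $4$ in lemma \ref{lem:J} and in the proof of proposition \ref{prop:NF}), so this factor must be tracked explicitly. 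However, two genuine gaps remain.

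First, you never prove the second half of clause (4), namely that $T^1_-\in\End_-^J(\Cont)$, i.e.\ that the skew part of $T^1$ commutes with $J$. This is not a consequence of $\nabla\xi=0$ and metricity, which is all your ``$\xi$-part'' computations use; it comes from the $X=\xi$ instance of $\nabla J=0$, that is $[A_\xi,J]=-\lc_\xi J=0$ (using $\lc_\xi J=0$ from lemma \ref{lem:J}), which after substituting $A=-T+\tfrac{3}{2}\b T$ and using (3) and the symmetry of $T^1_+$ reduces to $g(JZ,T^1_-Y)+g(Z,T^1_-(JY))=0$; this is equation \eqref{eq:T2} in the paper and is also exactly where the restriction $\Phi J=J\Phi$ on the free parameter enters. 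Second, in the converse your justification that the piece $\tfrac{9}{8}\omega-\tfrac{3}{8}\M\omega$ ``does not disturb $[A_X,J]$'' because it is ``built from $J$-commuting data'' is not correct: its $(2,0)$-component $\tfrac{3}{4}(\omega-\M\omega)$ does contribute to $[A_X,J]$, and that contribution cancels only against the one of the $(1,1)_a$-component $\tfrac{3}{8}(\omega+\M\omega)$ through the Bianchi correction in $A=-T+\tfrac{3}{2}\b T$, precisely because $\b T^{2,0}=\b T^{1,1}_a=\tfrac{1}{2}\omega$ (the content of clause (2) and of lemma \ref{lem:splittingident}); likewise the case $X=\xi$ of $[A_X,J]=-\lc_X J$ must be checked separately, which is where the commutation of $\Phi$ with $J$ is needed. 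You flag the endomorphism-level verification as the ``main obstacle'', but as it stands the argument for it---and hence the existence half of the theorem---is missing rather than merely unfinished bookkeeping.
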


In the sequel, we will denote the adapted connection defined by $\omega$, $B$ and $\Phi$ as $\nabla(\omega,B,\Phi)$.

\begin{proof}
 \textbf{First step:} We prove that $\nabla$ is adapted if and only if it satisfies
\begin{equation}\label{eq:Aadap}
 A(X;Y,JZ)+A(X;JY,Z)=-(\lc F)(X;Y,Z)
\end{equation}
for any $X,Y,Z\in\VF(M)$. To this end, we compute
\begin{align*}
 A(X;Y,JZ)+A(X;JY,Z)&=g(\nabla_X Y-\lc_X Y,JZ)+g(\nabla_X(JY)-\lc_X(JY),Z)\\
&=g((\nabla_X J)Y,Z)-g(\lc_X Y,JZ)-g(\lc_X(JY),Z).
\end{align*}
On the other hand we have that
\begin{align*}
 -(\lc F)(X;Y,Z)&=-X(F(Y;Z))+F(\lc_X Y,Z)+F(Y, \lc_X Z)\\
&=-g(\lc_X(JY),Z)-g(JY,\lc_X Z)-g(\lc_X Y,JZ)\\
&\phantom{=}+g(JY, \lc_X Z).
\end{align*}
This yields the claimed equivalence.

\textbf{Second step:} Using that $A=-T+\frac{3}{2}\b T$, we deduce that \eqref{eq:Aadap} is equivalent to
\begin{equation*}
 T(X;Y,JZ)+T(X;JY,Z)-\tfrac{3}{2}(\b T(X;Y,JZ)+\b T(X;JY,Z))=(\lc F)(X;Y,Z)
\end{equation*}
for all $X,Y,Z\in\VF(M)$. Alternatively, using that
\begin{equation*}
T(\xi;\xi,X)=g(\xi,\nabla_\xi X-\nabla_X \xi-[\xi,X])=-\eta([\xi,X])\stackrel{\eqref{eq:etaxikomm}}{=}0,
\end{equation*}
which proves (5), this may be written as the system of equations
\begin{align}
 T(X;Y,JZ)+T(X;JY,Z)-\tfrac{3}{2}(\b T(X;Y,JZ)+&\b T(X;JY,Z))\notag\\&=(\lc F)(X;Y,Z),\label{eq:Tcont}\\
T(\xi;Y,JZ)+T(\xi;JY,Z)-\tfrac{3}{2}(\b T(\xi;Y,JZ)+&\b T(\xi;JY,Z))\notag\\&=0,\label{eq:T2}\\
T(X;\xi,JZ)-\tfrac{3}{2}\b T(X;\xi,JZ)&=(\lc F)(X;\xi,Z)\label{eq:T1}
\end{align}
for any $X,Y,Z\in\Gamma(\Cont)$.
Furthermore, using the results of section \ref{sec:formscont}, we find that \eqref{eq:Tcont} is equivalent to the system
\begin{gather}
 2T^{0,2}(JX;Y,Z)-3(\b T)^-(JX;Y,Z)=(\lc F)^{0,2}(X;Y,Z),\label{eq:Tcont1}\\
 -T^{2,0}(JX;Y,Z)-\tfrac{3}{2}((\b T)^+(X;Y,JZ)+(\b T)^+(X;JY,Z))=0\label{eq:Tcont2}.
\end{gather}

\textbf{Third step:} We now prove the claims (1)-(4). To begin with, we obtain from \eqref{eq:Tcont1} and proposition \ref{prop:NF} that
\begin{equation}
 2T^{0,2}(JX;Y,Z)-3(\b T)^-(JX;Y,Z)=N^{0,2}(X;Y,Z)\label{eq:Tcont1a}
\end{equation}
Furthermore, we use a well-known formula for the exterior derivative and see that
\begin{align*}
 0=dF(X,Y,Z)
&=X(g(JY,Z))-Y(g(JX,Z))+Z(g(JX,Y))\\
&\phantom{=}-g(J[X,Y],Z)+g(J[X,Z],Y)-g(J[Y,Z],X).
\end{align*}
Because $\nabla$ is metric and by the definition of $T$, this can be seen to be equal to
\begin{align*}
0&=g(\nabla_X JY,Z)+g(JY,\nabla_X Z)-g(\nabla_Y JX,Z)-g(JX,\nabla_Y Z)
+g(\nabla_Z JX,Y)\\
&\phantom{=}+g(JX,\nabla_Z Y)+g(\nabla_X Y,JZ)-g(\nabla_Y X,JZ)-T(JZ;X,Y)-g(\nabla_X Z,JY)\\
&\phantom{=}+g(\nabla_Z X,JY)+T(JY;X,Z)+g(\nabla_Y Z,JX)-g(\nabla_Z Y,JX)-T(JX;Y,Z)\\
&=g(\nabla_X JY,Z)-g(\nabla_Y JX,Z)+g(\nabla_Z JX,Y)+g(\nabla_X Y,JZ)-g(\nabla_Y X,JZ)\\
&\phantom{=}-T(JZ;X,Y)+g(\nabla_Z X,JY)+T(JY;X,Z)-T(JX;Y,Z).
\end{align*}

Using that $\nabla J=0$, we then obtain
\begin{align*}
 0&=g(J(\nabla_X Y),Z)-g(J(\nabla_Y X),Z)+g(J(\nabla_Z X),Y)-g(J(\nabla_X Y),Z)\\
 &\phantom{=}+g(J(\nabla_Y X),Z)
-T(JZ;X,Y)-g(J(\nabla_Z X),Y)+T(JY;X,Z)\\
&\phantom{=}-T(JX;Y,Z)\\
&=-T(JZ;X,Y)+T(JY;X,Z)-T(JX;Y,Z).
\end{align*}
Taking the (0,2)-part on both sides, we see that
\begin{align*}
 0=-3\b T^{0,2}(JX;Y,Z)=-3(\b T)^-(JX;Y,Z).
\end{align*}
Inserting this into \eqref{eq:Tcont1a} yields (1).

Next, we use that $(\b T)^+=\b(T^{1,1}_a+T^{2,0})$ to deduce from \eqref{eq:Tcont2} that
\begin{align*}
 T^{2,0}(X;Y,Z)=&\tfrac{3}{4}(\b T^{1,1}_a-\M(\b T^{1,1}_a))(JX;JY,Z)\\
&+\tfrac{3}{4}(\b T^{2,0}-\M(\b T^{2,0}))(JX;JY,Z).
\end{align*}
Using lemma \ref{lem:splittingident}, we obtain that
\begin{equation*}
 T^{2,0}(X;Y,Z)=\frac{1}{2}\left(T^{2,0}(JX;JY,Z)+\phi^{-1}(T^{1,1}_a)(JX;JY,Z)\right),
\end{equation*}
which yields the first equality of (2). The equivalent formulation is obtained simply by applying $\b$.

(3) is deduced from lemma \ref{lem:adap} and \eqref{eq:etadeta} using the following simple calculation:
\begin{align*}
 T(\xi;X,Y)&=g(\xi,\nabla_X Y-\nabla_Y X-[X,Y])\\
&=-g(\xi,[X,Y])=-\eta([X,Y])=d\eta(X,Y).
\end{align*}

\noindent Using (3) and equation \eqref{eq:T1}, we obtain the following equivalent equations:
\begin{align*}
 g((\lc F)^1 Y,X)&=T(X;\xi,JY)-\tfrac{1}{2}(T(X;\xi,JY)\\
&\phantom{=}+T(\xi;JY,X)-T(JY;\xi,X)),\\
g((\lc F)^1 Y,X)+\tfrac{1}{2}d\eta(JY,X)&=\tfrac{1}{2}\left(g(X,T^1_+(JY)+T^1_-(JY))\right.\\ &\phantom{=}\left.+g(JY,T^1_+(X)+T^1_-(X))\right),\\
g((\lc F)^1 Y,X)+\tfrac{1}{2}d\eta(JY,X)&=g(X,T^1_+(JY)).
\end{align*}
By (F3) of proposition \ref{prop:NF}, we deduce
\begin{align*}
 g(X,T^1_+(Y))&=-g((\lc F)^1 (JY),X)+\tfrac{1}{2}d\eta(Y,X)\\
&=-g(JX,2N^1(JY))-\tfrac{1}{2}d\eta(JY,JX)+\tfrac{1}{2}d\eta(Y,X)\\
&=g(X,2JN^1(JY))\\
&=g(X,-\tfrac{1}{2}J\J Y).
\end{align*}
This yields the result on $T^1_+$ in (4). Concerning $T^1_-$, we use (3) and the fact that $d\eta(\cdot,J\cdot)=-d\eta(J\cdot,\cdot)$, 
to reduce \eqref{eq:T2} to
\begin{equation*}
 -\tfrac{3}{2}((\b T)(\xi;Y,JZ)+(\b T)(\xi;JY,Z))=0,
\end{equation*}
which, by definition of $\b$ is equivalent to
\begin{align*}
 T(\xi;Y,JZ)+T(Y;JZ,\xi)+T(JZ;\xi,Y)&\\+T(\xi;JY,Z)+T(JY;Z,\xi)+T(Z;\xi,JY)&=0.
\end{align*}
Once more making use of (3) and the above property of $d\eta$, we see that this is equivalent to
\begin{align*}
 -g(Y,T^1_+(JZ)+T^1_-(JZ))+g(JZ,T^1_+(Y)+T^1_-(Y))&\\-g(Z,T^1_+(JY)+T^1_-(JY))+g(JY,T^1_+(Z)+T^1_-(Z))&=0.
\end{align*}
Using the symmetry and skew-symmetry of the respective parts, one finally obtains the equivalent condition
\begin{equation*}
 g(JZ,T^1_-Y)+g(Z,T^1_-(JY))=0,
\end{equation*}
which completes the proof of (4).

\textbf{Fourth step:} We now prove the last claim. By the above arguments and the fact that $\b N^{0,2}=0$, we see that \eqref{eq:Tcont} is fulfilled if we choose $T^{0,2},T^{1,1}_a,T^{2,0}$ according to the conditions above, i.e all other parts of $T_c$ may be chosen freely. Now, assuming $(\b T)^+=\omega$ and $T^{1,1}_s=B$, we see that $\omega=\b(T^{1,1}_a+T^{2,0})$ and obtain
\begin{align*}
\b(T^{2,0})&=\tfrac{1}{2}(\b(T^{1,1}_a+T^{2,0})+\b(T^{2,0}-T^{1,1}_a))\\
&=\tfrac{1}{2}\omega,\\
\b(T^{1,1}_a)&=\tfrac{1}{2}(\b(T^{1,1}_a+T^{2,0})-\b(T^{2,0}-T^{1,1}_a))\\
&=\tfrac{1}{2}\omega.
\end{align*}
Thus, by lemma \eqref{eq:bOzn} and \eqref{eq:bOa}, we can deduce
\begin{align*}
 T^{2,0}=\tfrac{3}{2}(\b T^{2,0}-\M\b T^{2,0})=\tfrac{3}{4}(\omega-\M\omega),\\
 T^{1,1}_a=\tfrac{3}{4}(\b T^{1,1}_a-\M\b T^{1,1}_a)=\tfrac{3}{8}(\omega+\M\omega).
\end{align*}
As we have seen above, equations \eqref{eq:T2} and \eqref{eq:T1} are satisfied iff we choose $T^1_+$ as described above and $T^1_-\in \End_-^J(\Cont)$ and $T^2=d\eta$. The explicit description of $T$ is obtained by putting together all of the above data.
\end{proof}

One might now use this result to construct certain ``canonical connections'', by settig $T^{1,1}_s$, $(\b T)^+$ and $T^1_-$ equal to certain forms geometrically defined on a metric contact manifold.
\begin{Rem}
 Note that, unlike in the hermitian case, the Levi-Civit\` {a} connection is never adapted. If it were, than $T=0$ would have to satisfy the properties of the above theorem. However, $\xi\otimes d\eta$ never vanishes (due to the contact condition $\eta\wedge(d\eta)^m\neq 0$).
\end{Rem}

\begin{Rem} \textbf{The case of a 3-manifold}
 
Using the results of remark \ref{rem:formsthree}, we see that in this case $\omega$ does not appear. Furthermore, any endomorphsim of $\Cont$ commuting with $J$ is locally given by its value on $e_1$ (freely choosable) as its value on $f_1$ is then determined by the commutativity rule.
\end{Rem}

%
\subsection{The (generalized) Tanaka-Webster connection and CR connections}
Assume that $(M,H,J,\eta)$ is a strictly pseudoconvex CR manifold. On such manifolds, one has a canonical choice for the adapted connection, namely the \emph{Tanaka-Webster connection} $\tw$. This connection is defined as the metric connection whose torsion is given by
\begin{align}
T(X,Y)&=L_\eta(JX,Y)\xi,\\
T(\xi, X)&=-\frac{1}{2}([\xi,X]+J[\xi,JX])=-\tfrac{1}{2}J\J X\label{eq:Tanaka2}
\end{align}
for any $X,Y\in\Gamma(\Cont)$.
The part of the torsion given by the second equation is called the \emph{pseudo-hermitian torsion} and denoted $\tau(X)=T(\xi,X)$. We will now describe this connection in terms of the defining data according to theorem \ref{thm:torsion}. We begin by noting that as $JN=0$, we have $T^{0,2}=N^{0,2}=0$. Furthermore, $T(\Cont,\Cont)\subset \Xi$, and therefore, we have to choose $\omega=0,\;B=0$ such that $T^{1,1}=T^{2,0}=0$. The part $T^2=d\eta$ is determined independently of $\nabla$ anyway. Finally, $\tau$ lies in $\Omega^1_+(\Cont,\Cont)$ and thus, $T^1_-=0.$ We summarize our findings on the Tanaka-Webster connection in the following lemma, in which we also characterize its generalization to metric contact manifolds:
\begin{Lem}
 The Tanaka-Webster connection of a strictly pseudoconvex CR manifold is given by the following defining data:
 \begin{align*}
  (\b T)^+=0,\quad
 T^{1,1}_s=0\quad\text{and}\quad
 T^1_-=0.
 \end{align*}
Using the same defining data on a general metric contact manifold, one obtains the generalized Tanaka-Webster connection constructed in \cite[section 3.2]{Ni}.
\end{Lem}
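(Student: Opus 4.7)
The plan is to read off each piece of the Tanaka--Webster torsion tensor according to the decomposition of theorem \ref{thm:torsion} and then invoke the uniqueness part of that theorem. The defining formulas give only two pieces of information: $T(X,Y)=L_\eta(JX,Y)\xi$ for $X,Y\in\Gamma(\Cont)$ and $T(\xi,X)=-\tfrac{1}{2}J\J X$. The first formula shows that $T|_{\Cont\times\Cont}$ takes values exclusively in $\Xi$, which means every $\Cont$-valued piece of $T_c$ vanishes: $T^{2,0}=T^{1,1}=T^{0,2}=0$. In particular $T^{1,1}_s=0$, and consequently $(\b T)^+=\b(T^{1,1}_a+T^{2,0})=0$. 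That $T^{0,2}=0$ is consistent with item (1) of theorem \ref{thm:torsion} because on a strictly pseudoconvex CR manifold the integrability condition $J\circ N=0$ forces $N(X,Y)\in\Xi$ for $X,Y\in\Gamma(\Cont)$, giving $N^{0,2}=0$.

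Next, the $\Xi$-valued part $\xi\otimes T^2$ should equal $\xi\otimes d\eta$ by item (3) of theorem \ref{thm:torsion}. To see this agrees with the defining formula, note that for $X,Y\in\Gamma(\Cont)$ one has $L_\eta(JX,Y)=d\eta(JX,JY)=g(J^2X,JY)=-g(X,JY)=g(JX,Y)=d\eta(X,Y)$, so $T(X,Y)=d\eta(X,Y)\xi$, as required. For the remaining $\eta\wedge\Cont^*$ part, the decomposition gives $T(\xi,X)=T^1X$ for $X\in\Gamma(\Cont)$ (the $\xi\otimes d\eta$ and $\xi\otimes\eta\wedge T^1_\R$ contributions vanish on $(\xi,X)$). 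Comparing with $-\tfrac{1}{2}J\J X$ and recalling from theorem \ref{thm:torsion}(4) that $T^1_+=-\tfrac{1}{2}J\J$ for every adapted connection, one reads off $T^1_-=0$.

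Having shown that the Tanaka--Webster torsion satisfies $(\b T)^+=0$, $T^{1,1}_s=0$, $T^1_-=0$, the uniqueness half of theorem \ref{thm:torsion} identifies $\tw$ with $\nabla(0,0,0)$. For the generalization, one goes in the opposite direction: starting from $\omega=0$, $B=0$, $\Phi=0$, equation \eqref{eq:torsion} produces the torsion
\[
T=N^{0,2}+\xi\otimes d\eta-\tfrac{1}{2}\eta\wedge(J\J).
\]
It then remains to verify that this coincides with the torsion of the connection written down in \cite[section 3.2]{Ni}; this is a direct substitution in Nicolaescu's explicit formula.

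The main obstacle is not the CR case, whose verification is essentially bookkeeping about which subspace of $\Omega^2(M,TM)$ each defining formula lives in, but rather the comparison with Nicolaescu's construction: one must unwind his definition and check term-by-term that his torsion matches the displayed expression, in particular that the additional $N^{0,2}$ term (which is absent in the CR case but may be nonzero on a general metric contact manifold) is exactly the correction that appears in his formulation.
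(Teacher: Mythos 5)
Your treatment of the first assertion is correct and is essentially the paper's own argument, which appears in the discussion immediately preceding the lemma: $T(\Cont,\Cont)\subset\Xi$ forces $T^{2,0}=T^{1,1}=T^{0,2}=0$, hence $T^{1,1}_s=0$ and $(\b T)^+=\b(T^{1,1}_a+T^{2,0})=0$; the CR integrability $JN=0$ gives $N^{0,2}=0$, consistent with theorem \ref{thm:torsion}(1); and since the pseudo-hermitian torsion $\tau=-\tfrac12 J\J$ is symmetric, $T^1_-=0$. One small gloss: the $\xi\otimes\eta\wedge T^1_\R$ term does \emph{not} formally vanish when evaluated on $(\xi,X)$ --- its value there is $T^1_\R(X)\,\xi$ --- but it is zero in this situation because $-\tfrac12 J\J X$ lies in $\Cont$ and has no $\xi$-component (equivalently, by theorem \ref{thm:torsion}(5), since $\tw$ is adapted). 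Your consistency check $L_\eta(JX,Y)=d\eta(X,Y)$ is correct.

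The genuine gap is the second assertion of the lemma, which is exactly the part the paper's proof is devoted to (``We have already established the first statement and what remains to prove is the second one''). You reduce it to ``a direct substitution in Nicolaescu's explicit formula'' and yourself flag this comparison as the main obstacle, but you never carry it out. The paper does: it quotes the torsion of Nicolaescu's connection, which in the present conventions reads $T=N+\xi\otimes d\eta+\tfrac14\eta\wedge d\eta+\tfrac14\eta\wedge(J-J\J)$, and then, using the identity $\eta\wedge J=\eta\wedge d\eta-\xi\otimes d\eta$ together with the decomposition of $N$ from proposition \ref{prop:NF}, rewrites it as $N^{0,2}+\xi\otimes d\eta-\tfrac12\eta\wedge(J\J)$, i.e.\ as \eqref{eq:torsion} with $\omega=B=\Phi=0$. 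This is precisely where the convention bookkeeping (for $N$ and for the wedge of a one-form with an endomorphism) enters and where a sign or factor error would surface; without performing this computation, the identification of $\nabla(0,0,0)$ with Nicolaescu's generalized Tanaka--Webster connection is asserted rather than proved, so your argument establishes only the first sentence of the lemma.
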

\begin{proof}
 We have already established the first statement and what remains to prove is the second one. To this end, we consider the torsion of that connection given in the aforementioned paper, equation (3.7):
\begin{equation*}
 T=N+\xi\otimes d\eta+\tfrac{1}{4}\eta\wedge d\eta+\tfrac{1}{4}\eta\wedge(J-J\J),
\end{equation*}
where the differences between the forumla noted here and the one in \cite{Ni} are due to different conventions (namely for $N$ and for the wedge product of one-forms with endomorphisms). Noting that $\eta\wedge J=\eta\wedge d\eta-\xi\otimes d\eta$, we obtain
\begin{align*}
 T&=N+\xi\otimes d\eta +\tfrac{1}{4}\eta\wedge d\eta-\tfrac{1}{4}\eta\wedge(J\J)+\tfrac{1}{4}(\eta\wedge d\eta-\xi\otimes d\eta)\\
&=N^{0,2}+\xi\otimes d\eta-\tfrac{1}{2}\eta\wedge (J\J).
\end{align*}
This is the torsion of an adapted connection where all freely choosable parts are equal to zero.
\end{proof}
Thus, the generalization of the Tanaka-Webster connection constructed by Nicolaescu is a very natural one. Note that the only difference between the CR Tanaka-Webster connection and the generalized one is the part $T^{0,2}=N^{0,2}$, which vanishes if the manifold is CR. Note also that it is precisely this part in which this generalization differs from the one constructed by Tanno (cf \cite[Prop 3.1]{Tanno}, see also \cite[Prop 2.1]{Petit} for a slightly different description) and which ensures that our (or Nicolaescu's) connection is adapted.\vspace{10pt}

Using the complex description of a CR structure, one has an involutive space $\Cont^{1,0}$. One may now ask oneself whether there are adapted connections that are torsion-free on this space.
\begin{Def}[{\cite[p. 369]{Ni}}]
 An adapted connection on a CR manifold is called a \emph{CR connection} if its torsion (extended $\C$-linearly to $\Cont_c$) satisfies
\begin{equation*}
 T(\Cont^{1,0},\Cont^{1,0})=0.
\end{equation*}
\end{Def}
An easy calculation shows that $\tw$ is of that type and thus, this class is nonempty. In fact, using theorem \ref{thm:torsion}, we may give a complete description of this class:
\begin{Lem}
 An adpated connection $\nabla(\omega, B, \Phi)$ is CR if and only if $\omega=0$.
\end{Lem}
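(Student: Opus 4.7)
The plan is to evaluate the explicit torsion formula \eqref{eq:torsion} of $\nabla(\omega,B,\Phi)$ on a pair $X,Y\in\Gamma(\Cont^{1,0})$ (so $JX=iX$, $JY=iY$) and to see what survives, since the CR condition is precisely that $T(\Cont^{1,0},\Cont^{1,0})=0$.

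First I would observe that the two summands $-\tfrac{1}{2}\eta\wedge(J\J)$ and $\eta\wedge\Phi$ vanish on $\Cont\times\Cont$ because $\eta|_\Cont=0$. The summand $\xi\otimes d\eta$ contributes $d\eta(X,Y)\xi=ig(X,Y)\xi$, which vanishes because $\Cont^{1,0}$ is $g$-isotropic: using $g(JA,B)=-g(A,JB)$ on $\Cont$, one has $ig(X,Y)=g(JX,Y)=-g(X,JY)=-ig(X,Y)$. Since the manifold is CR, $J(N(U,V))=0$ for $U,V\in\Cont$, and combining this with (N1) and the invertibility of $J$ on $\Cont$ forces $N^{0,2}\equiv0$, so this term drops out as well. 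The summand $B(X,Y)$ vanishes because $B\in\Os$ obeys $B(JX,JY)=B(X,Y)$, i.e.\ $B(iX,iY)=B(X,Y)$, yielding $B(X,Y)=0$. Finally, $\M\omega(X,Y)=\omega(iX,iY)=-\omega(X,Y)$, so
\begin{equation*}
 T(X,Y)=\tfrac{9}{8}\omega(X,Y)-\tfrac{3}{8}\M\omega(X,Y)=\tfrac{3}{2}\omega(X,Y)\qquad\text{for }X,Y\in\Cont^{1,0}.
\end{equation*}

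It then remains to show that the $\Cont$-valued 2-form $\omega$ vanishes on $\Cont^{1,0}\times\Cont^{1,0}$ if and only if $\omega=0$. Via \eqref{eq:threetwo} this reduces to checking that the 3-form $\omega$ satisfies $\omega(Z,X,Y)=0$ for all $Z\in\Cont_c$ and $X,Y\in\Cont^{1,0}$. Since $\omega=\omega^{2,1}+\omega^{1,2}$ vanishes on any triple of $(1,0)$-vectors (each summand needs at least one $(0,1)$-argument), only $Z\in\Cont^{0,1}$ yields a non-trivial condition; on those triples the $(1,2)$-part vanishes as well, so the condition reduces to $\omega^{2,1}=0$. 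Reality of $\omega$ gives $\omega^{1,2}=\overline{\omega^{2,1}}$, whence $\omega^{2,1}=0\Leftrightarrow\omega=0$.

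The argument is essentially a term-by-term mechanical unwinding of \eqref{eq:torsion}; no substantial obstacle is expected. The only step demanding a moment of attention is the final type count, but it is immediate from the description $\Op=\Omega^{2,1}\oplus\Omega^{1,2}$ recalled in Section 3.1.
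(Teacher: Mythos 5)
Your proof is correct, and it follows the same overall strategy as the paper — test the CR condition $T(\Cont^{1,0},\Cont^{1,0})=0$ against the torsion formula \eqref{eq:torsion} — but the execution differs in the decisive last step. The paper stays with real arguments: it rewrites the CR condition as $T(X,Y)=T(JX,JY)$ and $T(JX,Y)=-T(X,JY)$ for $X,Y\in\Gamma(\Cont)$, notes that the first identity forces $T^{2,0}=0$ (the parts $T^{0,2}=N^{0,2}$ and $T^2_-$ vanish anyway in the CR setting), and then, from $T^{2,0}=\tfrac34(\omega-\M\omega)$, obtains $\omega=\M\omega$, which is excluded by the identity $\b\M\omega=\tfrac13\omega$ of \eqref{eq:bM} together with $\b\omega=\omega$. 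You instead evaluate \eqref{eq:torsion} directly on complexified $(1,0)$-arguments, where all terms except $\tfrac32\omega$ collapse (your term-by-term checks — $\eta$-terms, $d\eta$ by isotropy, $N^{0,2}=0$ from $JN=0$, $B$ of type $(1,1)$, $\M\omega=-\omega$ on such pairs — are all sound), and you conclude $\omega=0$ by a $(p,q)$-type count plus the reality relation $\omega^{1,2}=\overline{\omega^{2,1}}$. Your route avoids invoking \eqref{eq:bM} and the explicit formulas for $T^{2,0}$ and $T^{1,1}_a$ separately, at the price of working in the complexification throughout; both arguments rest on theorem \ref{thm:torsion} and on $N^{0,2}=0$ on a CR manifold, and both establish the equivalence in both directions.
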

\begin{proof}
 The space $\Cont^{1,0}$ is given by elements of type $X-iJX$, where $X\in\Cont$. Thus, we obtain the condition
\begin{equation*}
 0=T(X-iJX,Y-iJY)=T(X,Y)-T(JX,JY)-i(T(JX,Y)+T(X,JY)),
\end{equation*}
which, because $T(X,Y)$ is a real vector, is equivalent to
\begin{equation*}
 T(X,Y)=T(JX,JY)\quad\text{and}\quad T(JX,Y)=-T(X,JY).
\end{equation*}
We only need to satisfy the first condition as it implies the second one. This first condition implies that $T^{2,0}$ and $T^{0,2}$ as well as $T^2_-$ must vanish. Both $T^2_-$ and $T^{0,2}$ vanish anyway, so we obtain the condition $T^{2,0}=0$. From the proof of theorem \ref{thm:torsion} we know that $T^{2,0}=\tfrac{3}{4}(\omega-\M\omega)$. This yields $\omega=\M\omega$. However, this would mean that $\omega=\b \omega=\b\M\omega=\tfrac{1}{3}\omega$, which is absurd and thus $\omega=0$.
\end{proof}

\section{Dirac operators}
\subsection{Spinor connections and geometric Dirac operators}

In this section, we consider a metric contact manifold with a fixed $Spin$ or $Spin^c$ structure. Associated to this structure is always the spinor bundle, which we shall denote $\S$ in the case of a $Spin$ structure and $\S^c$ in the case of $Spin^c$ structure. Recall that any metric contact manifold always admits a canonical $Spin^c$ structure, whose spinor bundle is given by $\S^c=\Lambda^{0,\smallast} \Cont^*$ (cf. \cite{Petit}).

Now, every metric connection $\nabla$ on $TM$ induces a connection on the spinor bundles: $\nabla$ induces a connections form $C$ on the bundle of orthonormal, oriented frames $P_{SO}(M)$, locally given by
\begin{equation*}
 C^s(X)=C(ds(X))=\tfrac{1}{2}\sum_{i<j}g(\nabla_X s_i,s_j)E_{ij},
\end{equation*}
where $s=(s_1,...,s_n)$ is a local section of $P_{SO}(M)$ and $E_{ij}$ is the $n\times n$ matrix with entries $(E_{ij})_{ij}=-1$, $(E_{ij})_{ji}=1$ and all others equal to zero. This form then lifts to one on the $Spin$ or $Spin^c$ principal bundle such that the following diagrams commute respectively:
\begin{equation*}
 \xymatrix{
TP_{Spin}(M)\ar[d]\ar[r]^{\widetilde{C}}&\mathfrak{spin}_n\ar[d]\\
TP_{SO}(M)\ar[r]^{C}&\mathfrak{so}_n
},
\end{equation*}
\begin{equation*}
 \xymatrix{
TP_{Spin^c}(M)\ar[d]\ar[r]^{\widetilde{C}^Z}&\mathfrak{spin^c}_n\simeq \mathfrak{spin}_n\oplus i\R\ar[d]\\
TP_{SO}(M)\times P_1\ar[r]^{C\times Z}&\mathfrak{so}_n\oplus i\R
},
\end{equation*}
where the vertical arrows denote the respective two-fold converings and $Z$ is a connection form on the $U_1$-bundle $P_1$ associated to any $Spin^c$ strcuture via $P_1=\faktor{P_{Spin^c}}{Spin}$.
Then, this connection form induces a connection $\widetilde{\nabla}$ on the associated vector bundle $\S=P_{Spin}\times_\kappa \Delta_n$ (resp. $\widetilde{\nabla}^Z$ on $\S^c=P_{Spin^c}\times_{\kappa^c}\Delta_n$), where $\kappa:Spin_n\to \Delta_n$ is the spinor representation and $\kappa^c$ is the complex-linear extension to $Spin^c$. We can locally describe $\widetilde{\nabla}$ by
\begin{equation}\label{eq:spinconn} 
 \widetilde{\nabla}_X \phi|_{U}=[\widetilde{s},X(v)+\sum_{i<j}g(\nabla_X s_i,s_j)s_i\cdot s_j\cdot v],
\end{equation}
where $\phi|_U=[\widetilde{s},v]$ with $s=(s_1,...,s_n)\in\Gamma(U,P_{SO}(M)), v\in C^{\infty}(U,\Delta_n)$ and $\widetilde{s}$ is a lifting of $s$ to $P_{Spin}(M)$, and $\widetilde{\nabla}^Z$ by
\begin{equation}\label{eq:spincconn}
(\nsc_X\phi)(x)=[\widetilde{s\times e},X(v)+\frac{1}{2}\sum_{j<k}g(\nabla_X s_j,s_k)s_j\cdot s_k\cdot v+\frac{1}{2}Z\circ de (X).v],
\end{equation}
where $\phi|_U=[\widetilde{s\times e},v]$ with $s\in\Gamma(U,P_{SO})$ and $e\in\Gamma(U,P_1)$, $\widetilde{s\times e}$ is a lifting to $\Gamma(U,P_{Spin^c})$ and finally, $v\in C^{\infty}(U,\Delta_n)$.

Then, both spinorial connections have the following well-known properties:
\begin{Lem}
Let $X,Y$ be vector fields and  let $\varphi\in\Gamma(\S)$. Then for the connection $\widetilde{\nabla}$ induced on $\S$ by any metric connection $\nabla$ on $TM$, we have
\begin{equation}\label{eq:conncliff}
\widetilde{\nabla}_X(Y\cdot\varphi)=(\nabla_X Y)\cdot\varphi+Y\cdot\widetilde{\nabla}_X\varphi.
\end{equation}
Furthermore, $\widetilde{\nabla}$ is metric with respect to the hermitian scalar product on $\S$. Analogous results hold for $\widetilde{\nabla}^Z$.
\end{Lem}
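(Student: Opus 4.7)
Both statements are local and will follow directly from the explicit formulas \eqref{eq:spinconn} and \eqref{eq:spincconn}. The plan is to fix a local oriented orthonormal frame $s=(s_1,\dots,s_n)$ with connection coefficients $\omega_{ij}(X):=g(\nabla_X s_i,s_j)$, which are antisymmetric in $i,j$ by metric compatibility of $\nabla$. Writing $Y=\sum_k y_k s_k$ and $\varphi=[\widetilde{s},v]$, the local description of Clifford multiplication gives $Y\cdot\varphi=[\widetilde{s},\sum_k y_k\, s_k\cdot v]$, so I can apply \eqref{eq:spinconn} directly to $Y\cdot\varphi$.

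Expanding and using the Leibniz rule for $X(y_k v)$, three kinds of terms appear: the derivative of the coefficients $\sum_k X(y_k) s_k\cdot v$, the ``transport'' term $\sum_k y_k s_k\cdot X(v)$, and the Clifford-curvature contribution $\sum_{i<j}\omega_{ij}\, s_i\cdot s_j\cdot Y\cdot v$. The key algebraic input is the Clifford commutator identity
\[
[s_i\cdot s_j,\,s_k]=2\delta_{ik}\,s_j-2\delta_{jk}\,s_i\qquad (i\neq j),
\]
which, after multiplying by $\omega_{ij}$ and summing over $i<j$ using antisymmetry of $\omega$, produces exactly $\sum_k y_k\,\nabla_X s_k\cdot v$. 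Combined with $\sum_k X(y_k) s_k\cdot v$ this assembles into $(\nabla_X Y)\cdot\varphi$, while the remaining ``$s_k$-on-the-left'' piece collects with $\sum_k y_k s_k\cdot X(v)$ to form $Y\cdot\widetilde{\nabla}_X\varphi$. This gives \eqref{eq:conncliff}.

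For the hermitian metric compatibility, I would invoke the standard fact that the spin representation is unitary, so that each bivector $s_i\cdot s_j$ with $i\neq j$ acts as a skew-hermitian operator on $\Delta_n$. Differentiating the pointwise hermitian product of two spinors $\varphi,\psi$ and inserting \eqref{eq:spinconn} on both sides of the desired Leibniz identity, the connection-coefficient contributions $\sum_{i<j}\omega_{ij}\,s_i\cdot s_j\cdot v$ cancel between the two slots by skew-hermiticity, leaving only the base identity $X\langle v,w\rangle=\langle X(v),w\rangle+\langle v,X(w)\rangle$.

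For the $\mathrm{Spin}^c$ analogue, the only additional ingredient in \eqref{eq:spincconn} is the summand $\tfrac{1}{2}Z\circ de(X)\cdot v$, which is multiplication by a purely imaginary scalar (as $Z$ is a $U_1$-connection and takes values in $i\R$). Scalars commute with Clifford multiplication, so this extra summand contributes equally on both sides of \eqref{eq:conncliff}, and being purely imaginary it is anti-hermitian, so its contributions cancel in the metric-compatibility computation as well. The main obstacle is really just the bookkeeping for the Clifford commutator identity; once it is in place, both assertions follow formally.
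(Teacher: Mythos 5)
Your argument is correct and is the standard one; the paper itself gives no proof of this lemma (it records both properties as ``well-known''), so the only useful comparison is to note that your local-frame verification is precisely the computation the author implicitly relies on. The commutator identity $[s_i\cdot s_j,\,s_k]=2\delta_{ik}s_j-2\delta_{jk}s_i$ (valid with the convention $s_k\cdot s_k=-1$) together with the antisymmetry of $\omega_{ij}(X)=g(\nabla_X s_i,s_j)$ does assemble the connection-coefficient terms into $\sum_k y_k\,\nabla_X s_k\cdot v$, and skew-hermiticity of the bivectors $s_i\cdot s_j$, $i\neq j$ (plus, in the $Spin^c$ case, of the purely imaginary scalar $\tfrac12 Z(de(X))$, which moreover commutes with Clifford multiplication) gives the metric compatibility exactly as you describe. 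One bookkeeping caveat: the Leibniz identity \eqref{eq:conncliff} only closes with the factor $\tfrac12$ in front of $\sum_{i<j}g(\nabla_X s_i,s_j)\,s_i\cdot s_j\cdot v$, as in \eqref{eq:spincconn}; formula \eqref{eq:spinconn} as printed omits this factor (an inconsistency internal to the paper), and your sketch quotes the sum without the $\tfrac12$ while asserting that the commutator sum produces exactly $\sum_k y_k\,\nabla_X s_k\cdot v$ --- that exact outcome requires the $\tfrac12$, since without it one gets twice $\nabla_X s_k$ and the Leibniz rule fails. With the standard normalization your computation goes through verbatim, so this is a normalization slip to flag rather than a gap in the method.
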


Note that the connection used on $TM$ in \eqref{eq:conncliff} is $\nabla$ and not $\nabla^g$, as one usually demands on a Dirac bundle.

Now, each connection on $\S$ or $\S^c$ induces a first order differential operator:
\begin{Def}
 Let $M$ be spin. The first order differential operator
\begin{equation*}
 \D(\nabla)\colon \Gamma(\S)\xrightarrow[]{\widetilde{\nabla}}\Gamma(T^*M\otimes \S)\xrightarrow[]{c}\Gamma(\S),
\end{equation*}
where $c$ denotes Clifford multiplication, is called the \emph{Dirac operator} associated to $\nabla$.

\noindent Let $M$ be $Spin^c$. The first order differential operator
\begin{equation*}
 \D_Z(\nabla)\colon \Gamma(\S^c)\xrightarrow[]{\widetilde{\nabla}^Z}\Gamma(T^*M\otimes \S^c)\xrightarrow[]{c}\Gamma(\S^c)
\end{equation*}
is called the \emph{Dirac operator} associated to $\nabla$ and $Z$.
\end{Def}

Using the local formulas \eqref{eq:spinconn} and \eqref{eq:spincconn}, one deduces the following results stated in \cite{Ni}:
\begin{Lem}\label{lem:Diraccomp}
 Let $\nabla$ be a metric connection on $M$ and let $M$ be spin or spin${}^c$ respectively. Then, the following identities are satisfied by the Dirac operators of $\nabla=\lc+A$:
\begin{align}
 \D(\nabla)&=\D(\lc)-\frac{1}{2}c(\tr A)+\frac{3}{2}c(\b A),\label{eq:diraccomp}\\
\D_Z(\nabla)&=\D_Z(\lc)-\frac{1}{2}c(\tr A)+\frac{3}{2}c(\b A).\label{eq:diraccompc}
\end{align}
\end{Lem}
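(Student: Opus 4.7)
The plan is to compute the difference $\D(\nabla)-\D(\lc)$ locally in an orthonormal frame, reduce it to a triple sum involving $A$ and Clifford products of frame vectors, and then split that sum according to how many of the three summation indices coincide.

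Concretely, fix a point $x\in M$ and pick a local orthonormal frame $s=(s_1,\ldots,s_n)$ (synchronous at $x$ is convenient, but not essential for the algebraic identity we want to establish). Writing $\phi|_U=[\widetilde s,v]$, the local formula \eqref{eq:spinconn} for $\widetilde\nabla$ applied to $\nabla=\lc+A$ gives
\[
\widetilde\nabla_X\phi-\widetilde{\lc}_X\phi\;=\;\tfrac12\sum_{j<k}g(A_Xs_j,s_k)\,s_j\!\cdot\! s_k\!\cdot\!\phi,
\]
which, using $A(X;s_j,s_k)=-A(X;s_k,s_j)$ and $s_j\cdot s_k=-s_k\cdot s_j$ for $j\neq k$, extends symmetrically to
\[
\widetilde\nabla_X\phi-\widetilde{\lc}_X\phi\;=\;\tfrac14\sum_{j,k}A(X;s_j,s_k)\,s_j\!\cdot\! s_k\!\cdot\!\phi.
\]
Applying $\sum_i s_i\cdot$ and using the definition of the Dirac operator, the task reduces to evaluating
\[
S\;:=\;\tfrac14\sum_{i,j,k}A(s_i;s_j,s_k)\,s_i\!\cdot\! s_j\!\cdot\! s_k
\]
as an element of the Clifford bundle acting on $\phi$.

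Next, I would split $S$ according to coincidences among $i,j,k$. The cases $j=k$ and $i=j=k$ vanish immediately because $A(\cdot;Y,Y)=0$. In the case $i=j$ (with $i\neq k$) the Clifford relation $s_i\cdot s_i=-1$ collapses the product to $-s_k$ and, after extending the sum to all $i,k$ (the diagonal being zero), yields a contribution $-\tfrac14\sum_k(\tr A)(s_k)\,s_k=-\tfrac14 c(\tr A)$. In the case $i=k$ (with $i\neq j$) one has $s_i\!\cdot\! s_j\!\cdot\! s_i=s_j$ and $A(s_i;s_j,s_i)=-A(s_i;s_i,s_j)$, producing another $-\tfrac14 c(\tr A)$. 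So the degenerate part of $S$ contributes $-\tfrac12 c(\tr A)$.

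For the remaining fully off-diagonal part, the product $s_i\!\cdot\! s_j\!\cdot\! s_k$ is totally antisymmetric in the three distinct indices, so for each unordered triple $\{i,j,k\}$ the sum over the six orderings antisymmetrizes $A(s_\cdot;s_\cdot,s_\cdot)$. Combined with the skew-symmetry of $A$ in its last two arguments, one arrives at $\sum_\sigma\mathrm{sgn}(\sigma)\,A(s_{\sigma(1)};s_{\sigma(2)},s_{\sigma(3)})=6\,\b A(s_i,s_j,s_k)$, hence this part of $S$ equals $\tfrac14\cdot 6\,c(\b A)=\tfrac32 c(\b A)$, using the standard formula $c(\omega)=\sum_{i<j<k}\omega(s_i,s_j,s_k)\,s_i\!\cdot\! s_j\!\cdot\! s_k$ for a $3$-form $\omega$. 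Adding the two contributions gives exactly \eqref{eq:diraccomp}. The Spin${}^c$ statement \eqref{eq:diraccompc} then requires no further work: comparing \eqref{eq:spincconn} for $\nabla$ and for $\lc$ (with the same auxiliary $U_1$-connection $Z$) shows that the $\tfrac12 Z\circ de(X)$ term is identical for both and cancels in the difference, so the entire calculation above goes through unchanged.

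The main obstacle is purely combinatorial: the careful bookkeeping of signs in the triple Clifford sum, in particular the extraction of $\b A$ from the fully off-diagonal piece and the identification of the two $\tr A$-contributions coming from the two diagonal cases $i=j$ and $i=k$. Once the antisymmetrization is performed correctly, the numerical coefficients $-\tfrac12$ and $\tfrac32$ drop out automatically.
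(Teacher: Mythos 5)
Your proof is correct and follows exactly the route the paper intends: the paper itself gives no proof, merely citing \cite{Ni} and noting that the identities follow from the local formulas \eqref{eq:spinconn} and \eqref{eq:spincconn}, and your triple-sum computation (the two diagonal cases $i=j$ and $i=k$ each yielding $-\tfrac14 c(\tr A)$, the fully off-diagonal part antisymmetrizing to $\tfrac32 c(\b A)$, and the $Z$-term cancelling in the $Spin^c$ case) is precisely that standard derivation with the right coefficients. One small remark: you correctly used the factor $\tfrac12\sum_{j<k}$ as in \eqref{eq:spincconn}; the displayed formula \eqref{eq:spinconn} omits this factor (evidently a typo in the paper), and with the correct normalization your argument is complete.
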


Next, we prove that all Dirac operators defined above mimick the following behaviour of $\D(\lc)$:
\begin{Lem}
 The Dirac operator $\D_Z(\nabla)$ (or $\D(\nabla)$) of any metric connection $\nabla$ satisfies
\begin{equation*}
 \D_Z(\nabla)(f\phi)=\grad f\cdot \phi+f\D_Z(\nabla)\phi
\end{equation*}
 for any $f\in C^\infty(M)$ and $\phi\in \Gamma(\S^c)$.
\end{Lem}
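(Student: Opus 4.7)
The plan is to observe that this Leibniz-type rule follows immediately from the fact that $\widetilde{\nabla}^Z$ (respectively $\widetilde{\nabla}$) is a connection on $\S^c$ (resp.\ $\S$), combined with the definition of the Dirac operator as Clifford multiplication composed with the spinorial derivative. The key point is that the identification of $T^*M$ with $TM$ via the metric sends $df$ to $\grad f$.

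In more detail, I would first apply the Leibniz rule of the spinorial connection, which is a general property of any linear connection on a vector bundle and therefore holds for $\widetilde{\nabla}^Z$ regardless of which underlying metric connection $\nabla$ on $TM$ we started with. Concretely,
\begin{equation*}
\widetilde{\nabla}^Z_X(f\phi) = X(f)\phi + f\, \widetilde{\nabla}^Z_X\phi
\end{equation*}
for any $X \in \VF(M)$, $f \in C^\infty(M)$ and $\phi \in \Gamma(\S^c)$. Next, I would compose with Clifford multiplication. Choosing a local orthonormal frame $(b_i)$ of $TM$, the Dirac operator is $\D_Z(\nabla)\phi = \sum_i b_i \cdot \widetilde{\nabla}^Z_{b_i}\phi$, so that
\begin{equation*}
\D_Z(\nabla)(f\phi) = \sum_i b_i \cdot \bigl(b_i(f)\phi + f\,\widetilde{\nabla}^Z_{b_i}\phi\bigr) = \Bigl(\sum_i b_i(f)\, b_i\Bigr)\cdot \phi + f\,\D_Z(\nabla)\phi.
\end{equation*}

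Finally, I would identify $\sum_i b_i(f)\, b_i$ with $\grad f$, which is simply the definition of the gradient in an orthonormal frame. This yields the claimed identity. The argument for $\D(\nabla)$ in the spin case is identical, using $\widetilde{\nabla}$ in place of $\widetilde{\nabla}^Z$. There is no real obstacle here: the statement is a purely formal consequence of the Leibniz rule and the tensorial nature of Clifford multiplication, and in particular it does not depend on metric compatibility or on any specific feature of $\nabla$ beyond the fact that it induces a connection on the spinor bundle.
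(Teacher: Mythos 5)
Your proposal is correct, but it takes a different route from the paper. You prove the identity directly: the spinorial connection $\widetilde{\nabla}^Z$ satisfies the Leibniz rule $\widetilde{\nabla}^Z_X(f\phi)=X(f)\phi+f\widetilde{\nabla}^Z_X\phi$ simply because it is a connection on $\S^c$ (this is visible from the local formula \eqref{eq:spincconn}, whose correction terms are zero-order in $v$), and then you compute in a local orthonormal frame, identifying $\sum_i b_i(f)\,b_i$ with $\grad f$; this is essentially the observation that $\D_Z(\nabla)$ is a first-order operator whose principal symbol is Clifford multiplication, and it is valid for any connection inducing a spinor connection, with no appeal to metric compatibility. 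The paper instead argues by reduction: it takes for granted that $\D_Z(\lc)$ satisfies the identity, and then uses the comparison formula \eqref{eq:diraccompc} of lemma \ref{lem:Diraccomp}, $\D_Z(\nabla)=\D_Z(\lc)-\tfrac{1}{2}c(\tr A)+\tfrac{3}{2}c(\b A)$, noting that the extra terms are pointwise Clifford multiplications and hence commute with multiplication by $f$. Your argument is more self-contained and elementary (it needs neither the comparison lemma nor the Levi-Civit\`{a} case as an input), while the paper's proof is shorter given that lemma \ref{lem:Diraccomp} is already available and fits its overall strategy of comparing every adapted connection with $\lc$. Both proofs are complete and correct.
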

\begin{proof}
 The identity is satisfied by $\D_Z(\lc)$. The extension to any Dirac operator  $\D_Z(\nabla)$ then follows from \eqref{eq:diraccompc}, because Clifford multiplication is defined pointwise and thus commutes with the multiplication by $f$.
\end{proof}

\begin{Def}
 A connection $\nabla$ is called \emph{nice} if its Dirac operator $\D(\nabla)$ is symmetric. Two connections $\nabla^1,\nabla^2$ are called \emph{Dirac equivalent} if they induce the same Dirac operator: $\D(\nabla^1)=\D(\nabla^2)$.
\end{Def}
Now, using that the Dirac operators $\D(\lc)$ and $\D_Z(\lc)$ are symmetric, one deduces the following results, also stated in \cite{Ni}
\begin{Lem}
 \begin{numlist}
  \item The connection $\nabla$ is nice if and only if the torsion of $\nabla$ satisfies $\tr T=0$. This is the case if and only if the $Spin^c$ Dirac operator $\D_Z(\nabla)$ is symmetric for any $U_1$ connection $Z$.
  \item Let $\nabla^1$ and $\nabla^2$ be nice metric connections. Then they are Dirac equivalent if and only if $\b T^1=\b T^2$. This holds holds if and only if $\D_Z(\nabla^1)=\D_Z(\nabla^2)$ for any $U_1$ connection $Z$.

 \item Let $(M,g)$ be complete. Then any Dirac operator $\D(\nabla)$ or $\D_Z(\nabla)$  that is symmetric is indeed essentially self-adjoint.
 \end{numlist}
\end{Lem}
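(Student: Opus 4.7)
The plan combines the comparison formulas \eqref{eq:diraccomp}--\eqref{eq:diraccompc} with the adjointness rule for Clifford multiplication on spinors: for a real $k$-form $\omega$, one has $c(\omega)^{*} = (-1)^{k(k+1)/2}c(\omega)$, so $c(\alpha)$ is skew-adjoint for every $1$-form $\alpha$ and $c(\omega)$ is self-adjoint for every $3$-form $\omega$. This is the only analytic input needed for parts (1) and (2); part (3) will use a standard theorem of Chernoff.

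For part (1), since $\D(\lc)$ is symmetric, $\D(\nabla)$ is symmetric if and only if the zeroth-order operator $-\tfrac{1}{2}c(\tr A)+\tfrac{3}{2}c(\b A)$ is. Its skew-adjoint part $-\tfrac{1}{2}c(\tr A)$ must then vanish, and since the map sending a $k$-form of degree $k<\dim M$ to its Clifford action on spinors is injective, this forces $\tr A=0$. To translate this to $T$, I apply $\tr$ to $A=-T+\tfrac{3}{2}\b T$; for any $3$-form $\omega$, antisymmetry in the first two slots gives $\sum_i \omega(e_i;e_i,\cdot)=0$, so $\tr(\b T)=0$ and $\tr A=-\tr T$. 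For the $Spin^c$ statement, observe that the $Z$-dependent summand in \eqref{eq:spincconn} is independent of $\nabla$, hence $\D_Z(\nabla)-\D_Z(\lc)$ equals the very same perturbation; since $\D_Z(\lc)$ is symmetric for every $U_1$-connection $Z$, the equivalence is uniform in $Z$.

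For part (2), when both $\tr T^i=0$ the comparison formula reduces to $\D(\nabla^1)-\D(\nabla^2)=\tfrac{3}{2}\bigl(c(\b A^1)-c(\b A^2)\bigr)$, which vanishes if and only if $\b A^1=\b A^2$ by the same injectivity of Clifford multiplication on $3$-forms. To re-express this in terms of the torsion, I apply $\b$ to $A=-T+\tfrac{3}{2}\b T$ and use that $\b$ restricts to the identity on $3$-forms viewed as $TM$-valued $2$-forms via \eqref{eq:threetwo} (immediate from the cyclic-sum definition applied to an already totally antisymmetric tensor). This yields $\b A=-\b T+\tfrac{3}{2}\b T=\tfrac{1}{2}\b T$, so $\b A^1=\b A^2$ if and only if $\b T^1=\b T^2$. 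The argument for $\D_Z$ is word-for-word the same, uniformly in $Z$.

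For part (3), $\D(\nabla)$ has the same principal symbol as $\D(\lc)$, namely Clifford multiplication by the cotangent covector, and is symmetric by hypothesis. Chernoff's theorem then implies that any first-order symmetric differential operator with this symbol on a complete Riemannian manifold is essentially self-adjoint on $C_c^{\infty}$-sections; the same applies to $\D_Z(\nabla)$. The only delicate algebraic point in the whole proof is the identity $\b A=\tfrac{1}{2}\b T$, which rests on the identification \eqref{eq:threetwo} and the idempotence of $\b$ on its image; everything else is direct substitution, the adjoint sign rule, and a citation of Chernoff.
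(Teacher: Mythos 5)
Your proof is correct. For parts (1) and (2) you follow essentially the paper's route: the paper derives both statements ``immediately'' from the comparison formulas \eqref{eq:diraccomp}--\eqref{eq:diraccompc}, and what you add --- the sign rule making $c(\tr A)$ skew-adjoint and $c(\b A)$ self-adjoint, together with the translations $\tr A=-\tr T$ (since $\b T$ is a $3$-form, hence traceless) and $\b A=\tfrac{1}{2}\b T$ (since $\b$ fixes $3$-forms) --- is exactly the computation that makes ``immediately'' honest. One small caution: your blanket claim that Clifford multiplication is injective on all forms of degree $k<\dim M$ is not quite right in odd dimensions, where a $k$-form and a multiple of its Hodge dual act identically; what is true, and all you actually use, is injectivity on each fixed degree $k$ with $k\neq n-k$ (degrees $1$ and $3$ here), and for the $1$-form it already follows from $c(\alpha)^{2}=-|\alpha|^{2}$. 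Part (3) is where you genuinely diverge: you cite Chernoff's theorem (a symmetric first-order operator with Dirac-type principal symbol, hence unit propagation speed, on a complete manifold is essentially self-adjoint on compactly supported sections), whereas the paper adapts Wolf's proof step by step, checking that its three stages need only that the operator be closable, elliptic of first order (it differs from $\D_Z(\lc)$ by zeroth-order terms), and satisfy $\D_Z(\nabla)(f\phi)=\grad f\cdot\phi+f\D_Z(\nabla)\phi$ --- the latter being precisely why the paper proves that Leibniz-type rule in a separate lemma. Your citation is shorter and entirely standard; the paper's route is more self-contained in that it isolates exactly which properties of the perturbed operator the self-adjointness argument uses, at the cost of walking through Wolf's proof rather than quoting a general theorem.
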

\begin{proof}
 The first two statements are immediately derived from the Lemma \ref{lem:Diraccomp}. For the essential self-adjointness of the Dirac operator, consider the proof of essential self-adjointness of the Dirac operator of Wolf,\cite{WolfESA}. The proof is given only for $\D(\lc)$, but is indeed extendable to all symmetric Dirac operators: The domain of $\D(\nabla)$ is $\Gamma_c(\S^c)\subset L^2(\S^c)$ (we consider the $Spin^c$ case here, the arguments in the $Spin$ cases are analogous), where the subscript $c$ denotes compact support. The proof uses the following norm on the domain of $\D_Z(\nabla)^*$, where ${}^*$ denotes the adjoint:
\begin{equation*}
 N(\phi)=\sqrt{\|\phi\|^2_{L^2}+\|D_Z(\nabla)^*\|^2_{L_2}}.
\end{equation*}
Then the following results are proven:
\begin{enumerate}
 \item If $\Gamma_c(\S^c)$ is dense in $\dom(\D_Z(\nabla)^*)$ with respect to the $N$ norm, then $\D_Z(\nabla)^*$ is essentially self-adjoint.
 \item $\Gamma_c(\S^c)$ is dense in $\dom_c(\D_Z(\nabla)^*)$ with respect to the $N$ norm.

 \item If $(M,g)$ is complete, then $\dom_c(\D_Z(\nabla)^*)$ is dense in $\dom(\D_Z(\nabla)^*)$ with respect to the $N$ norm.
\end{enumerate}
The proof of the first statement requires nothing of $D_Z(\nabla)$ but to be a closable operator. To prove the second one, we only need $\D_Z(\nabla)$ to be an elliptic differential operator of order one, which it is because it differs form $\D_Z(\lc)$ only by lower oder terms.
Finally, the proof of the third statement does not make use of the explicit form of $\D_Z(\nabla)$ either, it only needs it to fulfil $\D_Z(\nabla)(f\phi)=\grad f\cdot \phi+f\D_Z(\nabla)\phi$ for any $f\in C^\infty(M)$ and $\phi\in \Gamma(\S^c)$, which it does.
\end{proof}

In the following section, we will use these formulas to determine which adapted connections induce symmetric Dirac operators and which induce the same Dirac operators.

\subsection{Dirac operators of adapted connections}

We now use the formulas in the preceding section to establish some properties of Dirac operators associated to adapted connections. To this end, we calculate the trace of the torsion of an adapted connection and its image under the Bianchi operator. Recall (equation \eqref{eq:torsion}) that the torsion of such a connection is given by
\begin{equation*}
 T=N^{0,2}+\tfrac{9}{8}\omega-\tfrac{3}{8}\M\omega+B+\xi\otimes d\eta-\tfrac{1}{2}\eta\wedge (J\J) +\eta\wedge \Phi,
\end{equation*}
where we can freely choose
\begin{align*}
\omega&\in\Op=\{\omega\in\Otc\;|\;\omega=\omega^{2,1}+\omega^{1,2}\text{ as (p,q)-forms}\},\\
B&\in \Os=\{D\in\Omega^2(\Cont,\Cont)\;|\;D(J\cdot,\cdot)=D\text{ and } \b D=0\},\\
\Phi&\in\End^J_-(\Cont)=\{F:\Cont\to\Cont\;|\;g(X,FY)=-g(FX,Y)\text{ and }FJ=JF\}.
\end{align*}
We know that $\omega$ and $N^{0,2}$ are traceless and so is $\eta\wedge \Phi$ because $\Phi$ is skew-symmetric. Furthermore, we immediately see that $\tr(\eta\wedge(J\J))(X)=0$ and we calculate for some adapted basis $(e_i,f_i)$, making use of the various properties of $\J$:
\begin{align*}
 \tr(\eta\wedge(J\J))(\xi)&=\sum_{i=1}^m (\eta\wedge J\J)(e_i,e_i,\xi)+(\eta\wedge J\J)(f_i,f_i,\xi)\\
&=\sum_{i=1}^m g(e_i,J\J e_i)+g(f_i,J\J f_i)\\
&=0.
 \end{align*}
Thus, we deduce
\begin{equation}\label{eq:trT}
 \tr T=-\tfrac{3}{8}\tr\M\omega+\tr B.
\end{equation}
Concerning the Bianchi operator, we note that $\b B=0$ and $\b N^{0,2}=0$. Furthermore, because $J\J$ is symmetric, $\b(\eta\wedge J\J)=0$. Then, using \eqref{eq:bM}, we deduce
\begin{equation}\label{eq:bT}
 \b T=\omega+\tfrac{1}{3}\eta\wedge d\eta+\b(\eta\wedge\Phi).
\end{equation}

Using these equations, we deduce the following result:
\begin{Prop}\label{prop:Dirac}
 \begin{numlist}
  \item The  adapted connection $\nabla(\omega,B,\Phi)$ is nice if and only if $\tr (B)=\tfrac{3}{8}\tr\M \omega$. Moreover, if $(M,g)$ is complete, the Dirac operator of any such connection is essentially self-adjoint.

 \item Two nice adapted connections $\nabla(\omega,B,\Phi)$ and $\nabla(\hat\omega,\hat B,\hat \Phi)$ are Dirac equivalent if and only if $\omega=\hat \omega$ and $\Phi=\hat\Phi$.

 Thus any Dirac equivalence class of nice adapted connections is determined by $\omega, \Phi$, while the connections in it are parametrized by $B\in \Os$ such that $\tr B=\tfrac{3}{8}\tr\M \omega$.
 \end{numlist}
\end{Prop}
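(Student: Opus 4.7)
The plan is to read off both statements directly from the formulas \eqref{eq:trT} and \eqref{eq:bT} combined with the characterizations of nice and Dirac-equivalent connections established in the preceding lemma. No further analysis of the spinor bundle is required: everything has already been reduced to linear-algebraic questions about $\tr T$ and $\b T$.

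For (1), I would simply apply \eqref{eq:trT}: since the previous lemma says that $\nabla(\omega,B,\Phi)$ is nice precisely when $\tr T = 0$, and since $\tr T = -\tfrac{3}{8}\tr\M\omega + \tr B$, niceness is equivalent to $\tr B = \tfrac{3}{8}\tr\M\omega$. The essential self-adjointness claim under completeness is then just part (3) of the preceding lemma applied to this Dirac operator.

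For (2), the preceding lemma gives that two nice connections are Dirac equivalent if and only if $\b T = \b\hat T$. By \eqref{eq:bT} this becomes
\begin{equation*}
 \omega - \hat\omega \;+\; \b\bigl(\eta\wedge(\Phi-\hat\Phi)\bigr) \;=\; 0.
\end{equation*}
The key observation is that $\omega-\hat\omega \in \Op \subset \Omega^3(\Cont)$, whereas $\b(\eta\wedge(\Phi-\hat\Phi)) \in \eta\wedge\Omega^2(\Cont)$ (it has $\xi$ as a necessary argument, as is already visible in Lemma \ref{lem:bEnd}). These two spaces are complementary in $\Omega^3(M)$, so both summands must vanish separately, giving $\omega = \hat\omega$. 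The second equation then forces $\b(\eta\wedge(\Phi-\hat\Phi)) = 0$, and since $\Phi - \hat\Phi \in \End_-^J(\Cont)$ is skew-symmetric, Lemma \ref{lem:bEnd} yields $\tfrac{2}{3}g(Y,(\Phi-\hat\Phi)X) = 0$ for all $X,Y\in\Gamma(\Cont)$, hence $\Phi = \hat\Phi$. Conversely, if $\omega = \hat\omega$ and $\Phi = \hat\Phi$, then \eqref{eq:bT} shows $\b T = \b\hat T$ regardless of the choice of $B,\hat B$.

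The final parametrization remark is immediate: once $\omega$ and $\Phi$ are fixed, the niceness condition from (1) restricts $B \in \Os$ to the affine subspace $\tr B = \tfrac{3}{8}\tr\M\omega$, and every such $B$ yields a connection in the same Dirac-equivalence class. The only mildly delicate point is checking that the two summands in the split equation above really live in complementary subspaces; this is where the decomposition of $\Omega^3(M)$ into $\Lambda^3(\Cont^*)$ and $\eta\wedge\Lambda^2(\Cont^*)$, introduced at the start of the third section, is being used.
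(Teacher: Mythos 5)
Your proposal is correct and follows essentially the same route as the paper: (1) is read off from the trace formula together with the niceness criterion $\tr T=0$ and the self-adjointness statement of the preceding lemma, and (2) reduces via the formula for $\b T$ to the fact that $\b(\eta\wedge\Phi)$ determines the skew-symmetric $\Phi$ (lemma \ref{lem:bEnd}). Your explicit remark that $\omega-\hat\omega$ and $\b(\eta\wedge(\Phi-\hat\Phi))$ lie in the complementary summands $\Lambda^3(\Cont^*)$ and $\eta\wedge\Lambda^2(\Cont^*)$ is exactly the point the paper leaves implicit (and uses again in the Levi-Civit\`a corollary), so no gap remains.
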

\begin{proof} The first part of (1) is obvious from \eqref{eq:trT}. For (2), recall that $\b(\eta\wedge \Phi)$ completely determines $\Phi$ (compare lemma \ref{lem:bEnd}).
\end{proof}

\begin{Rem}
In particular, we see that in a Dirac equivalence class of nice connections, there is a CR connection if and only if all connections in this class are CR. Thus, contrary to the claim in the last section of \cite{Ni}, there may be more than one CR connection in a Dirac equivalence class, as $B$ may still be chosen freely as long as it satisfies $\tr B=0$ (due to (1) of the above proposition and $\omega=0$). In fact, the uniqueness proof in the above paper uses that the torsion of a CR connection would satisfy $T(X;Y,Z)=0$ for any $X,Y,Z\in \Cont$, which seems to be wrong.

In fact, on a three-manifold the uniqueness result does hold, because using the results mentioned in remark \ref{rem:formsthree}, we see that there is no non-zero form in $\Omega^2(\Cont,\Cont)$ with vanishing trace. In higher dimesnions, such forms do exist. We note that in dimension three, there is in fact (by the same argument) a unique adapted connection in each Dirac equivalence class and we do not need to demand that the connection be CR in order to obtain uniqueness.
\end{Rem}

We now use these results to characterize some connections that are Dirac equivalent to certain known connections:
\begin{Cor}
 An adpated connection $\nabla(\omega,B,\Phi)$ is Dirac equivalent to the generalized Tanaka-Webster connection if and only if it satisfies $\omega=0$, $\Phi=0$ and $\tr B=0$. Any such connection is CR and its Dirac operator takes the form
\begin{equation*}
 \D_Z(\nabla)=\D_Z(\lc)+\tfrac{1}{4}c(\eta\wedge d\eta).
\end{equation*}
\end{Cor}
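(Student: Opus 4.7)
The corollary follows by combining Proposition~\ref{prop:Dirac} with the identification of the generalized Tanaka-Webster connection as $\nabla(0,0,0)$ (from the Lemma in the preceding subsection) and the CR-connection lemma. There are three assertions to verify: the characterization of Dirac equivalence, the CR property of such connections, and the explicit formula for the Dirac operator.

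For the characterization of Dirac equivalence, I first note via \eqref{eq:trT} applied with $\omega=0$ and $B=0$ that $\tr T=0$ for $\nabla(0,0,0)$, so this connection is nice and its Dirac operator is symmetric. Any connection Dirac equivalent to it must therefore also be nice. Applying Proposition~\ref{prop:Dirac}(2) to the pair $\nabla(\omega,B,\Phi)$ and $\nabla(0,0,0)$ yields $\omega=0$ and $\Phi=0$, and the niceness condition $\tr B=\tfrac{3}{8}\tr\M\omega$ then reduces to $\tr B=0$. Conversely, these three conditions imply niceness by Proposition~\ref{prop:Dirac}(1) and hence Dirac equivalence by Proposition~\ref{prop:Dirac}(2). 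The CR property is then immediate: since $\omega=0$, the CR-characterization lemma from the preceding subsection directly yields that $\nabla(\omega,B,\Phi)$ is a CR connection.

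Finally, for the explicit form of the Dirac operator, I compute $\D_Z(\nabla(0,0,0))$ via \eqref{eq:diraccompc}. Specializing \eqref{eq:trT} and \eqref{eq:bT} to $\omega=B=\Phi=0$ gives $\tr T=0$ and $\b T=\tfrac{1}{3}\eta\wedge d\eta$. Writing $A=-T+\tfrac{3}{2}\b T$ and noting that the trace of a totally skew-symmetric three-form (viewed as a $TM$-valued two-form) vanishes, one obtains $\tr A=0$; using moreover that $\b$ acts as the identity on three-forms, $\b A=-\b T+\tfrac{3}{2}\b T=\tfrac{1}{2}\b T=\tfrac{1}{6}\eta\wedge d\eta$. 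Substituting into \eqref{eq:diraccompc} then yields $\D_Z(\nabla(0,0,0))=\D_Z(\lc)+\tfrac{1}{4}c(\eta\wedge d\eta)$, which is the Dirac operator of every connection Dirac equivalent to $\nabla(0,0,0)$. No real obstacle arises here; the only subtle arithmetic point is the identity $\b^2=\b$ on three-forms, needed to simplify $\b A$, after which everything is bookkeeping from results already established.
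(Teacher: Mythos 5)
Your proof is correct and takes essentially the same route as the paper: symmetry of the Tanaka--Webster Dirac operator forces niceness, proposition \ref{prop:Dirac} then gives $\omega=0$, $\Phi=0$ and hence $\tr B=\tfrac{3}{8}\tr\M\omega=0$, the CR lemma handles the CR claim, and the formula follows from $\b T=\tfrac{1}{3}\eta\wedge d\eta$ together with \eqref{eq:diraccompc}. Your explicit computation of $\tr A=0$ and $\b A=\tfrac{1}{2}\b T$ merely spells out what the paper dismisses as immediate.
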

\begin{proof}
 To induce the same Dirac operator as $\tw$, the connection will need to be nice. Thus, by proposition \ref{prop:Dirac}, $\tr B=\tfrac{3}{8}\tr\M \omega$. Because all freely choosable parts of $T^{TW}$ vanish, we obtain, again by proposition \ref{prop:Dirac}, that $\omega$ and $\Phi$ vanish, which in turn implies $\tr B=0$. The explicit fomrula is immediately deduced from the above calculations of $\b T$ and from \eqref{eq:diraccompc}.
\end{proof}

\begin{Cor}
 An adpated connection $\nabla(\omega,B,\Phi)$ is Dirac equivalent to the Levi-Civit\`{a} connections if and only if it satisfies $\omega=0$, $\tr B=0$ and $\Phi=-\frac{1}{2}J$. Any such connections is CR.
\end{Cor}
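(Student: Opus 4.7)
The plan is to reduce the claim to the Dirac equivalence criterion established in the previous subsection. Since $\lc$ has vanishing torsion, it is trivially nice and has $\b T^g = 0$. Any connection Dirac equivalent to $\lc$ must in particular have a symmetric Dirac operator, hence must itself be nice; then the Dirac equivalence lemma reduces the problem to the two conditions (a) $\tr T = 0$ and (b) $\b T = 0$ for the torsion of $\nabla(\omega,B,\Phi)$.

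For condition (b), I would invoke formula \eqref{eq:bT},
\begin{equation*}
 \b T = \omega + \tfrac{1}{3}\eta\wedge d\eta + \b(\eta\wedge\Phi),
\end{equation*}
and split the equation $\b T = 0$ according to the decomposition $\Omega^3(M) = \Omega^3(\Cont)\oplus\eta\wedge\Omega^2(\Cont)$. The component in $\Omega^3(\Cont)$ is exactly $\omega$, since both $\eta\wedge d\eta$ and $\b(\eta\wedge\Phi)$ vanish when all three arguments lie in $\Cont$; this forces $\omega=0$. For the component in $\eta\wedge\Omega^2(\Cont)$, I would evaluate on triples $(\xi,X,Y)$ with $X,Y\in\Gamma(\Cont)$: the $\eta\wedge d\eta$ term contributes $\tfrac{1}{3}d\eta(X,Y)=\tfrac{1}{3}g(JX,Y)$, while Lemma \ref{lem:bEnd} yields $\b(\eta\wedge\Phi)(\xi,X,Y)=\tfrac{2}{3}g(Y,\Phi X)$. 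Setting the sum to zero for all $X,Y\in\Gamma(\Cont)$ gives $\Phi = -\tfrac{1}{2}J$. A brief verification shows that $-\tfrac{1}{2}J$ is skew-symmetric on $\Cont$ (because $J$ is) and commutes with $J$, so it lies in $\End_-^J(\Cont)$ as required.

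With $\omega=0$ established, Proposition \ref{prop:Dirac}(1) states that niceness is equivalent to $\tr B = \tfrac{3}{8}\tr\M\omega = 0$, so condition (a) becomes $\tr B = 0$. Collecting the three conditions $\omega=0$, $\tr B=0$, $\Phi = -\tfrac{1}{2}J$ gives the stated characterization, and conversely each of these choices produces an adapted connection with $\b T = 0$ and $\tr T = 0$, hence Dirac equivalent to $\lc$. The final CR assertion is immediate from the earlier characterization of CR connections as those with $\omega = 0$. I do not anticipate a real obstacle: once the splitting of $\b T$ by type is made explicit and Lemma \ref{lem:bEnd} is applied, the argument is essentially bookkeeping.
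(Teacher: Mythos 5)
Your proposal is correct and follows essentially the same route as the paper: force niceness (so $\tr B=\tfrac{3}{8}\tr\M\omega$), then impose $\b T=\b T^{g}=0$ via \eqref{eq:bT}, split that equation into its $\Omega^3(\Cont)$ and $\eta\wedge\Omega^2(\Cont)$ components to get $\omega=0$ and $d\eta(X,Y)=-2g(Y,\Phi X)$, hence $\Phi=-\tfrac{1}{2}J$ and $\tr B=0$, with the CR claim coming from the earlier lemma. Your explicit remark that Proposition \ref{prop:Dirac}(2) cannot be applied directly (replaced by the general Dirac-equivalence lemma for metric connections) matches the paper's own caveat, so there is nothing to add.
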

\begin{proof}
 Again, the connection will need to be nice, i.e. $\tr B=\tfrac{3}{8}\tr\M \omega$. Now, for the comparison with $\lc$ we cannot use proposition \ref{prop:Dirac} as $\lc$ is not adapted. Instead, using \eqref{eq:bT}, we deduce the condition $0=\omega+\tfrac{1}{3}\eta\wedge d\eta+\b(\eta\wedge\Phi)$. As they take their arguments form different spaces, $\omega$ and $\tfrac{1}{3}\eta\wedge d\eta+\b(\eta\wedge\Phi)$ will have to vanish seperately. We calculate
\begin{equation*}
 \tfrac{1}{3}(\eta\wedge d\eta)(\xi,X,Y)=-\b(\eta\wedge \Phi)(\xi,X,Y)\quad\Leftrightarrow \quad d\eta(X,Y)=-2g(Y,\Phi X).
\end{equation*}
Using that $d\eta=g(J\cdot,\cdot)$ then yields then claim.
\end{proof}
 Note that we have just proven that adapted connections may induce the same Dirac operators as non-adapted ones.

\bibliographystyle{../halpha}
\bibliography{../bibli}
\end{document}